\newlength{\insidemargin}
\newlength{\outsidemargin}
\definecolor{darkred}{rgb}{1,0,0} %can change the intensity in [0,1]
\definecolor{darkgreen}{rgb}{0,0.8,0}
\definecolor{darkblue}{rgb}{0,0,1}
\patchcmd{\@settitle}{\uppercasenonmath\@title}{}{}{}
\patchcmd{\@setauthors}{\MakeUppercase}{}{}{}
\patchcmd{\section}{\scshape}{}{}{}
\numberwithin{equation}{section}
\newtheorem{theorem}{Theorem}[section] % 1st argument is your name for it
\newtheorem{lemma}[theorem]{Lemma}     % 2nd argument is what is printed
\newtheorem{example}[]{Example}
\newtheorem{remark}[]{Remark}
\def    \eps    {\epsilon}
\newcommand{\CA}{{\mathcal A}}
\newcommand{\CM}{{\mathcal M}}
\newcommand{\CS}{{\mathcal S}}
\newcommand{\A}{{\mathcal A}}
\newcommand{\D}{{\Delta}}
\def    \F      {{\mathbb F}}
\def    \C      {{\mathbb C}}
\def    \reals      {{\mathbb R}}
\def    \Z      {{\mathbb Z}}
\def    \N      {{\mathbb N}}
\def    \T      {{\mathbb T}}
\def    \CP     {{\mathbb C}{\mathbb P}}
\def    \12    {{\frac{1}{2}}}
\def    \p      {\partial}
\def    \im     {\operatorname{im}}
\def    \Sp     {\operatorname{Sp}}
\def    \MUCZ  {\operatorname{\mu_{\scriptscriptstyle{CZ}}}}
\title[On Non-contractible Periodic Orbits]% end with percent
 {\large \textnormal{On non-contractible hyperbolic periodic orbits and}\\ \textnormal{periodic points of symplectomorphisms}}
\author{Marta Bator\'eo}
\thanks{ \\
	This work was done while the author was a Post-Doctorate at IMPA, funded by CAPES-Brazil.\\
	2000 {\normalfont\itshape
		Mathematics Subject Classification\/} 53D40 (primary), 37J10, 70H12 (secondary) }
\begin{document}

\maketitle
\vspace*{-0.5cm}
\begin{abstract}
We prove the existence of infinitely many periodic points of symplectomorphisms isotopic to the identity if they admit at least one (non-contractible) hyperbolic periodic orbit and satisfy some condition on its flux. The obtained periodic points correspond to periodic orbits whose free homotopy classes are formed by iterations of the hyperbolic periodic orbit. Our result is proved for a certain class of closed symplectic manifolds and the main tool we use is a variation of Floer theory for non-contractible periodic orbits and symplectomorphisms, the Floer--Novikov theory. 

For a certain class of symplectic manifolds, the theorem generalizes the main results proved for Hamiltonian diffeomorphisms in \cite{GG:hyp12} and for symplectomorphisms and contractible orbits in \cite{Ba_hyp}. 
\end{abstract}

\tableofcontents
\thispagestyle{empty}
\section{Introduction and the main result}

In this paper, we prove that given a non-contractible hyperbolic periodic orbit of a symplectomorphism isotopic to the identity there are infinitely many periodic points provided the symplectomorphism satisfies some constraints on its flux. The result holds for certain symplectic manifolds such as products of some surfaces with complex projective spaces (see Section~\ref{section:mainthm} for more details). 

The main theorem of this paper fits into the conjecture originally (as far as we know) stated by Hofer and Zehnder claiming (see \cite[p.263]{HZ11})

"(...) that every Hamiltonian map on a compact symplectic manifold $(M, \omega)$ possessing more fixed points than necessary required by the V. Arnold conjecture possesses always infinitely many periodic points."

For instance, the expected bound for $\CP^n$ is $n+1$. This conjecture was motivated by the results of Gambaudo and Le Calvez in \cite{Gam_LeCa} and Franks in \cite{Fr88} (see also \cite{Fr92,Fr96}) where they prove that an area preserving diffeomorphism of $S^2$ with more than two fixed points has infinitely many periodic points; see also \cite{BH11,CKRTZ12,Ke12} for symplectic topological proofs. 

More accurately, our result fits in a variant of this conjecture described by G{\"u}rel in \cite{Gu_noncon,Gu_linear}; see also \cite{GG:hyp12}. This version suggests the presence of infinitely many periodic points provided the existence of a fixed point which is \emph{unnecessary} from a homological or geometrical perspective.
In fact, Theorem~\ref{maintheo} asserts that, for a certain class of symplectic manifolds, a symplectomorphism isotopic to the identity with a (non-contractible) hyperbolic periodic orbit possesses infinitely many periodic points, as long as it satisfies some condition on its flux. The theorem is a symplectic analogue of a result proved in \cite{GG:hyp12} for Hamiltonian diffeomorphisms and also generalizes the theorem in \cite{Ba_hyp} for contractible periodic orbits of symplectomorphisms. Moreover, in dimension greater than two, the conjecture for Hamiltonian diffeomorphisms is proved in \cite{Gu_noncon} and \cite{Gu_linear} where the fixed points are assumed to be \emph{unnecessary} from a homological perspective. 

Recall that, in general, a symplectomorphism need not have periodic points, as is the case of $\psi\colon \T^{2n}\rightarrow \T^{2n}$ defined by
\[
\psi(x,y)=(x+\theta,y)\quad\quad\text{with}\quad \theta\not=1,
\]
whereas, for many symplectic manifolds, a Hamiltonian diffeomorphism has an infinite number of periodic points. In fact, Conley conjectured (\cite{Co}) the existence of infinitely many periodic orbits of a Hamiltonian diffeomorphism on the $2$-torus and the statement has now also been proved, e.g. for negative monotone manifolds or symplectic manifolds with ${c_1}|_{\pi_2(M)}=0$; see \cite{CGG11,GG:action09,He12} and also \cite{FrHa03,Gi:conley,GG:conley12,Hi,LeC,SZ92}. Observe that in the conjecture mentioned above described by G\"urel there is an assumption, for instance, on the existence of a periodic orbit of a certain geometrical type. This is in contrast with the Conley conjecture which is unconditional. Hence, owing to the above example we expect some condition in order for a symplectomorphism, which is not necessarily a Hamiltonian diffeomorphism, to have infinitely many periodic points.

\subsection{Existence of infinitely many periodic orbits}\label{section:mainthm}

Consider a closed monotone symplectic manifold $(M^{2n}, \omega)$ with minimal Chern number $c_1^{\min}$ and toroidal minimal Chern number $c_{1,T}^{\min}$ (for the exact definitions see \eqref{eqn:minimalChernnumb} and \eqref{eqn:toroidalc1min}, respectively). We denote by $q$ the generator of the Novikov ring $\Lambda=\mathbb{Z}_2[q^{-1},q]$ normalized with degree $-2c_1^{\min}$ (see Section~\ref{section:loopspaces}). 

Given a homotopy class (with fixed end points) of a symplectic isotopy, $[\phi_t]$, connecting the identity to some symplectomorphism $\phi$, we denote by $\varrho$ the cohomology class on $\mathcal{L}M$ (i.e. the space of loops in $M$)
\[
\varrho\colon=-\overline{[\omega]} + \text{ev}^*(\text{Flux}[\phi_t])\in H^1(\mathcal{L}M;\reals),
\]
where $\overline{[\omega]}$ is the cohomology class in $H^1(\mathcal{L}M;\reals)$ associated with the symplectic form $\omega$ (see Section~\ref{section:loopspaces}), $\text{ev}\colon \mathcal{L}M\rightarrow M$ is the evaluation map $x\mapsto x(0)$ and $\text{Flux}([\phi_t])$ is the image of the class $[\phi_t]$ by the homomorphism (see Section~\ref{section:po_af})

\[\text{Flux}\colon \widetilde{\text{Symp}_0}(M,\omega) \rightarrow H^1(M,\reals); \quad 
[\phi_t] \mapsto \left[ \int_0^1 \omega (X_t, \cdot) dt\right].
\]

The following theorem on periodic orbits of symplectomorphisms is the main result of this paper.
	
	\begin{theorem}\label{maintheo}
		\
		
		Let $(M^{2n}, \omega)$ be a closed, connected, strictly spherically monotone symplectic manifold. Assume that \[c^{\min}_{1,T}\geq \frac{n}{2}+1,\]
		\begin{eqnarray}\label{l:1}\beta*\alpha =q[M] \quad \quad \text{in}\; HQ_*(M)=H_*(M)\otimes \Lambda\end{eqnarray}
		for some ordinary homology classes $\alpha, \beta\in H_*(M)$ with $\deg(\alpha), \deg(\beta)<2n$
		and 
		\begin{eqnarray}\label{item:qs} \text{either} \quad c^{\min}_{1} =c^{\min}_{1,T} \quad \text{or} \quad \deg(\alpha)\geq 3n+1-2c^{\min}_{1,T}.
		\end{eqnarray}	
		
		Then any symplectomorphism $\phi$ in $\text{Symp}_0(M,\omega)$ with
		\begin{enumerate}
			\item a hyperbolic (non-contractible) periodic orbit $\gamma$, with free homotopy class $\zeta$,
			\item $\varrho$ and $\overline{c_1}$ strictly toroidally proportional and strictly $\zeta$-toroidally proportional
\end{enumerate}
		has infinitely many periodic points.

The corresponding periodic orbits lie in the free homotopy classes formed by iterations of the hyperbolic periodic orbit $\gamma$.
	\end{theorem}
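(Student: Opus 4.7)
The strategy I would follow is an adaptation of the Floer-theoretic framework of \cite{GG:hyp12} and \cite{Ba_hyp} to the Floer--Novikov setting required for non-contractible orbits of symplectomorphisms. I argue by contradiction: assume $\phi$ has only finitely many periodic points. My first task is to set up Floer--Novikov homology $\HF(\phi^k;\zeta^k)$ in each free homotopy class $\zeta^k$ of $\Ll M$. The hypothesis that $\varrho$ and $\overline{c_1}$ are strictly toroidally proportional and strictly $\zeta$-toroidally proportional is precisely what guarantees that the action functional lifts to a single-valued function on an appropriate abelian cover of the component of the (twisted) loop space corresponding to $\zeta^k$, so that the Floer complex is well defined, the Conley--Zehnder grading makes sense, and the pair-of-pants product is available.

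Next, I would analyze the local Floer homology of the iterates $\gamma^k$. Since $\gamma$ is hyperbolic, every iterate $\gamma^k$ is non-degenerate and hyperbolic, so $\HFL(\phi^k,\gamma^k)\cong\mathbb{Z}_2$, concentrated in the single degree determined by the Conley--Zehnder index $\CZ(\gamma^k)$. For any sufficiently large prime $k$, the finiteness of periodic points --- combined with the observation that every periodic orbit in $\zeta^k$ must arise as the $k$-th iterate of a periodic orbit whose period divides $k$ and whose free homotopy class iterates to $\zeta^k$ --- implies that the only orbit representing $\zeta^k$ is $\gamma^k$ itself. Consequently
\[
\HF(\phi^k;\zeta^k)\;\cong\;\HFL(\phi^k,\gamma^k),
\]
and is therefore supported in exactly one degree.

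The contradiction is then extracted from the quantum relation $\beta*\alpha=q[M]$. Using the Floer--Novikov pair-of-pants product together with a PSS-type morphism from $\HQ_*(M)$ into the Floer--Novikov homology of the contractible class of $\phi^k$, I multiply the fundamental class of $\gamma^k$ first by $\alpha$ and then by $\beta$ to obtain two nonzero classes in $\HF(\phi^k;\zeta^k)$; the relation forces the composition to land in the degree shifted by $q[M]$, and since $\deg(\alpha),\deg(\beta)<2n$ these two classes sit in strictly different degrees, contradicting the single-degree support just established. The bounds $c^{\min}_{1,T}\geq n/2+1$ and \eqref{item:qs} are the numerical conditions that prevent the relevant degree shifts from being absorbed by the $2c_1^{\min}$-periodic Novikov grading (with \eqref{item:qs} handling the toroidal-versus-spherical gap when $c_1^{\min}\neq c^{\min}_{1,T}$). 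The main obstacle I expect is verifying that the pair-of-pants product and the PSS map behave correctly in the Floer--Novikov framework with non-Hamiltonian flux; this is the principal reason the strict toroidal proportionality hypotheses appear, and it is precisely what must be established to transport the Hamiltonian argument of \cite{GG:hyp12} and the contractible-orbit argument of \cite{Ba_hyp} into the present setting.
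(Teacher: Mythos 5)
Your overall framework (contradiction, Floer--Novikov homology for non-contractible classes, the quantum relation $\beta*\alpha=q[M]$ as the engine) matches the paper, but the mechanism you propose for extracting the contradiction is different from the paper's and, as written, does not work. There are three concrete problems.

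First, the claim that for a sufficiently large prime $k$ the only orbit of $\phi^k$ in the class $k\zeta$ is $\gamma^k$, so that $\HF(\phi^k;k\zeta)\cong\HFL(\phi^k,\gamma^k)$, is unjustified. Finiteness of periodic points does not prevent other periodic points (of various periods dividing $k$, or fixed points) from producing $k$-periodic orbits in the class $k\zeta$; nothing ties the free homotopy class of an unrelated orbit to $\zeta$. The paper never isolates $\gamma^k$ homotopically; instead it controls the \emph{other} orbits analytically, by choosing $k$ via Kronecker's theorem so that all action values of $\phi^k$ lie in an $\epsilon$-neighborhood of $h_{\varrho}\Z$ and all augmented actions are either near $0$ or larger than a constant $C$.

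Second, even if $\gamma^k$ were the only geometric orbit in $k\zeta$, the Floer--Novikov complex is generated by its lifts $[\gamma^k,v]$, one for each element of $\Gamma_{k\zeta}$, so the homology is a $\Lambda_{k\zeta}$-module supported in degrees $\MUCZ(\overline{\gamma}^k)+2c^{\min}_{1,k\zeta}\Z$, not in a single degree. Consequently your degree-based contradiction collapses: $\Phi_{\beta}\Phi_{\alpha}$ shifts degree by $\deg(\alpha)+\deg(\beta)-4n=-2c_1^{\min}$, which is exactly the shift realized by multiplication by $q$, i.e., by recapping; this is perfectly consistent with the module structure and yields no contradiction. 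The paper's contradiction is about \emph{actions}, not degrees: the identity $\Phi_{\beta}\Phi_{\alpha}([\overline{\gamma}^k])=q[\overline{\gamma}^k]$ forces an intermediate orbit $\overline{y}$ connected to both $\overline{\gamma}^k$ (action $0$) and $q\overline{\gamma}^k$ (action $-\nu_T h_{\varrho}$) by Floer trajectories, and the Ball-Crossing Energy Theorem forces $-\epsilon>\A_{\{\phi'_t\}}(\overline{y})>-\nu_T h_{\varrho}+\epsilon$, contradicting the Kronecker arrangement. This is also where the hypotheses $c^{\min}_{1,T}\geq n/2+1$ and \eqref{item:qs} actually enter, through mean-index and augmented-action estimates, not through a grading-periodicity count.

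Third, your proposal omits the one ingredient for which hyperbolicity is assumed at all: the Ball-Crossing Energy Theorem, giving a $k$-independent lower bound $c_{\infty}$ on the energy of any solution asymptotic to $\gamma^k$ that crosses a fixed neighborhood of $\gamma$. Without it there is no way to separate the action of the intermediate orbit from that of $\overline{\gamma}^k$ and $q\overline{\gamma}^k$. Relatedly, the paper uses the quantum \emph{module} action $\Phi_{\alpha}$ on filtered Floer--Novikov homology rather than a pair-of-pants product between non-contractible classes (which, as the paper remarks, is not defined for an individual class $\zeta$), and no PSS map into the contractible sector is needed.
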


The conditions in the hypotheses of this theorem involve both the manifold and the symplectomorphism. 

Among the manifolds meeting the requirements of Theorem~\ref{maintheo} are products of surfaces with genus greater than or equal to two, $\Sigma_{g\geq 2}$, and complex projective spaces (see Examples~\ref{example:firstchern}~and~\ref{example:quantumprod} and Remark~\ref{rmk:exampleManifolds}). These manifolds satisfy the first requirement in \eqref{item:qs}. More generally, instead of $\Sigma_{g\geq 2}$, we may consider closed K\"ahler manifolds with negative sectional curvature (e.g., complex hyperbolic
spaces); see, e.g., \cite{Gu_noncon} for a discussion of these manifolds. 

The hyperbolicity assumption (i) on the orbit $\gamma$ is crucial (see Remark~\ref{rmk:hyp}). Ginzburg and G{\"u}rel (\cite[Section~3]{GG:hyp12}) proved that the energy required for a (\emph{Floer--Novikov}) trajectory to approach a $k$-th iteration of $\gamma$ and crossing its fixed neighborhood is bounded from below by a strictly positive constant independent of $k$. The requirement that $\gamma$ is hyperbolic is so that the orbit has the feature just mentioned and which is described in Section~\ref{section:BCEthm}. Hence, more \emph{generally}, there are infinitely many periodic points if in the hypotheses of Theorem~\ref{maintheo} the hyperbolicity condition is replaced by the property described in Theorem~\ref{thm:ballcrossing}. 

Condition (ii) involves both the manifold and the symplectomorphism $\phi$. It gives constraints on the flux of an isotopy $\{\phi_t\}$ (connecting $id$ and $\phi$). Here, there is a relation between the cohomology classes $\varrho$ and $\overline{c_1}$ on the loop space $\mathcal{L}M$. As explained in the beginning of this section, the class $\varrho$ depends on the cohomology class of the symplectic form $\omega$ and on the flux of the considered isotopy $\{\phi_t\}$ and the class $\overline{c_1}$ depends on the first Chen class of the manifold. There are maps meeting these requirements. Namely, consider on $M=\Sigma_{g\geq 2}\times \CP^n$ a symplectic isotopy $\phi_t=\psi_t\times id \colon M\rightarrow M$ where $\psi_t$ is a symplectic isotopy of $\Sigma_{g\geq 2}$ and $id\colon \CP^n \rightarrow \CP^n$ is the identity map on $\CP^n$. Let $\gamma$ be a (non-contractible) periodic orbit of $\phi_t$ in $M$ with free homotopy class $\zeta$. Assume also that $\text{Flux}([\phi_t])\in H^1(M;\reals)$ and $\overline{[\omega]}\in H^1(\mathcal{L}M;\reals)$ are positively proportional on $\pi_1(\mathcal{L}M)$ and on $\pi_1(\mathcal{L}_{\zeta}M)$, i.e.

\[
\text{ev}^*\text{Flux}([\phi_t])|_{\pi_1(\mathcal{L}M)}=\lambda_1 \overline{[\omega]}|_{\pi_1(\mathcal{L}M)} \quad \text{and} \quad
\text{ev}^*\text{Flux}([\phi_t])|_{\pi_1(\mathcal{L}_{\zeta}M)}=\lambda_2 \overline{[\omega]}|_{\pi_1(\mathcal{L}_{\zeta}M)} 
\]
for some positive constants $\lambda_1,\lambda_2>0$. Since $M$ is strictly spherically and toroidally monotone, such an isotopy $\phi_t$ satisfies condition (ii). (Cf. \cite[Proposition~1.2 and Example~1]{Ba_hyp} for the case where $\gamma$ is contractible.)

Moreover, if the periodic orbit $\gamma$ is contractible, then, when the manifold $M$ is toroidally monotone and $c^{\min}_{1} =c^{\min}_{1,T}$, Theorem~\ref{maintheo} generalizes the main results in \cite{Ba_hyp} and \cite{GG:hyp12}.

\subsection{Acknowledgments} 
The author is grateful to Viktor Ginzburg for valuable discussions. Part of this work was carried out at ICMAT. The author would like to thank the institute for their warm hospitality and support.

\section{Preliminaries}\label{section:prelim}
	
In this section, we introduce the notation used throughout the paper and review some facts needed to prove the results.
	\subsection{Symplectic manifolds}
	\label{section:sympmnfd}
	Let $(M^{2n},\omega)$ be a \emph{closed} (i.e. compact and with no boundary) symplectic manifold and consider an almost complex
	structure $J$ on $TM$ compatible with $\omega,$ i.e. such that $g(X, Y) := \omega(X, JY)$ is a Riemannian metric on $M.$
	
	Since the space of almost complex structures compatible with $\omega$ is connected, the first Chern class $c_1\in H^2(M;\Z)$ is uniquely determined by $\omega$.
	The \emph{minimal Chern number} of a symplectic manifold $(M,
	\omega)$ is the positive integer $c_1^{\min}$ which generates the discrete group formed by the integrals
	of $c_1$ over the spheres in $M$, i.e.,
	\begin{eqnarray}\label{eqn:minimalChernnumb}
	\left<c_1, \pi_2(M)\right> = c_1^{\min} \Z
	\end{eqnarray}
	where $c_1^{\min}\in\N$ with the convention $c_{1}^{\text{min}}=\infty$ if the image of $c_1\colon \pi_2(M)\xrightarrow{} \Z$ is $0$. 
	
	The symplectic manifold $(M,\omega)$ is called \emph{spherically monotone} if the integrals of the cohomology classes $c_1$ and
	$[\omega]$ over spheres satisfy the proportionality condition
	\[
	[\omega]|_{\pi_2(M)}=\lambda_{S}   {c_1}|_{\pi_2(M)},
	\] 
	for some non-negative constant $\lambda_{S}\in\reals.$ If $[\omega]|_{\pi_2(M)}$ and ${c_1}|_{\pi_2(M)}$ are non-zero, then we say that $(M,\omega)$ is \emph{strictly spherically monotone}. Observe that, in this case, $c_1^{\min}<\infty$.  
	
\subsection{Loop spaces}\label{section:loopspaces} In this section, we follow \cite{BH01_non-con}, \cite{O_flux06} and references there in. 
 
Let $\mathcal{L}M:=\mathcal{C}^{\infty}(S^1,M)$ be the space of smooth free loops in $M$ where $S^1=\reals / \Z$. The first Chern class $c_1\in H^2(M;\Z)$ of $(M,\omega)$ defines a cohomology class 
%$t\in S^1$
\[
\overline{c_1}\in H^1(\mathcal{L}M;\Z)=\text{Hom}(H_1(\mathcal{L}M;\Z),\Z),
\] 
by interpreting a class in $H_1(\mathcal{L}M;\Z)$ as a linear combination of tori, i.e. maps $S^1\times S^1 \rightarrow M$, and $\overline{c_1}$ as integrating $c_1$ over it. Similarly, $[\omega]\in H^2(M;\reals)$ gives rise to a cohomology class 
\[
\overline{[\omega]}\in H^1(\mathcal{L}M;\reals)=\text{Hom}(H_1(\mathcal{L}M;\Z),\reals).
\]

The manifold $(M,\omega)$ is called \emph{atoroidal} if the cohomology classes $\overline{c_1}$ and $\overline{[\omega]}$ satisfy the conditions

\[
\overline{[\omega]}|_{\pi_1(\mathcal{L}M)}=0= \overline{c_1}|_{\pi_1(\mathcal{L}M)}.
\]

i.e.
\[
\displaystyle\int_{S^1\times S^1} v^*\omega=0=\displaystyle\int_{S^1\times S^1} v^*\eta \quad\quad \text{for all }v:S^1\times S^1 \rightarrow M,
\]
%$(s,t)\in S^1\times S^1$
where $\eta$ is a $2$-form in $M$ that represents the first Chern class $c_1\in H^2(M;\Z)$.

Moreover, $(M,\omega)$ is called \emph{toroidally monotone} if 
\[
\overline{[\omega]}|_{\pi_1(\mathcal{L}M)}=\lambda_T \overline{c_1}|_{\pi_1(\mathcal{L}M)}
\]
for some non-negative constant $\lambda_{T}\in\reals$ and, if $\overline{[\omega]}|_{\pi_1(\mathcal{L}M)}$ and $\overline{c_1}|_{\pi_1(\mathcal{L}M)}$ are non-zero, the manifold is called \emph{strictly toroidally monotone}.\\

Given a cohomology class $[\theta]\in H^1(M;\reals)$, we consider the following cohomology class on the loop space $\mathcal{L}M$
\begin{eqnarray*}
\varrho:= -\overline{[\omega]} + \text{ev}^*[\theta]\in H^1(\mathcal{L}M;\reals)
\end{eqnarray*}
where $\text{ev}\colon \mathcal{L}M \rightarrow M $ denotes the evaluation map $x \mapsto x(0)$.

We say that $\varrho$ and $\overline{c_1}$ are \emph{toroidally proportional} if 

\[
\varrho|_{\pi_1(\mathcal{L}M)}=\lambda_{\varrho}\overline{c_1}|_{\pi_1(\mathcal{L}M)}
\]
i.e.

\begin{eqnarray*}\label{eqn:toroprop}
-\displaystyle\int_{S^1\times S^1} v^*\omega + \displaystyle\int_{S^1} v_0^*\theta=\lambda_{\varrho}\displaystyle\int_{S^1\times S^1} v^*\eta \quad\quad \text{for all }v:S^1\times S^1 \rightarrow M,
\end{eqnarray*}
for some non-negative $\lambda_{\varrho}\in \reals$, where $v_0\colon S^1 \rightarrow M$ is given by $v_0=v(\cdot,0)$ and $\eta$ is a $2$-form that represents the class $c_1$. 
%$s\inS^1$

If $\varrho|_{\pi_1(\mathcal{L}M)}$ and $\overline{c_1}|_{\pi_1(\mathcal{L}M)}$ are non-zero, we say $\varrho$ and $\overline{c_1}$ are \emph{strictly toroidally proportional}. 

Moreover, we say that $\varrho$ is \emph{rational} if the group of integrals of $\varrho$ over tori is discrete, i.e., 
\begin{eqnarray}\label{eqn:varphi_rational}
\left< \varrho, \pi_1(\mathcal{L}M)\right>=h_{\varrho}\Z
\end{eqnarray}
for some non-negative $h_{\varrho}\in \reals$. Notice that if $\varrho$ and $\overline{c_1}$ are strictly toroidally monotone  then $\varrho$ is rational and if $[\theta]=0$ then $\varrho$ and $\overline{c_1}$ are toroidally proportional if and only if $(M,\omega)$ is toroidally monotone.

Define the \emph{toroidal minimal Chern number} $c^{\min}_{1,T}\in\N$ by
\begin{eqnarray}\label{eqn:toroidalc1min}c^{\min}_{1,T}\Z=\left< \overline{c_1}, \pi_1(\mathcal{L}M)\right>,\end{eqnarray}
with convention $c^{\min}_{1,T}=\infty$ if the image of $\overline{c_1}\colon \pi_1(\mathcal{L}M) \rightarrow \Z$ is 0,
and hence, when $\varrho$ and $\overline{c_1}$ are toroidally proportional, we have
\begin{eqnarray}\label{eqn:c1minTprop}
c^{\min}_{1,T}=\frac{h_{\varrho}}{\lambda_{\varrho}}.
\end{eqnarray} Moreover define

\begin{eqnarray}\label{eqn:nut}
\nu_{T}:= \frac{c^{\min}_1}{c^{\min}_{1,T}} > 0.
\end{eqnarray} 
Notice that $c^{\min}_{1,T}\not= 0$ and, when $M$ is strictly spherically monotone, $c^{\min}_1 <\infty$. 

\begin{example}\label{example:firstchern}
	Consider $M=\CP^n\times \Sigma_{g\geq 2}$, where $\Sigma_{g\geq 2}$ is a surface with genus greater than or equal to 2. The surface $\Sigma_{g\geq 2}$ is aspherical and atoroidal and $c_{1,T}^{\min}(\CP^n)=c_{1}^{\min}(\CP^n)=n+1$. Hence, the toroidal minimal Chern number of 
	$M$ is $c_{1,T}^{\min}(M)=c_{1}^{\min}(M)=n+1$ (see, e.g., \cite{MS12}  for more details). In Theorem~\ref{maintheo}, the assumption on the minimal Chern number
	$$
	c_{1,T}^{\min}(M)\geq \dim(M)/4 +1,
	$$
	in this case, is
	$$
	n+1\geq \frac{n+1}{2} +1,
	$$
	which is equivalent to $n\geq 1$. 
\end{example}

Let $\zeta$ be a free homotopy class of maps $S^1\rightarrow M$, (i.e. let $\zeta\in \pi_0(\mathcal{L}M)$), fix a reference loop $z\in \zeta$ and a symplectic trivialization of $TM$ along $z$. Denote by $\mathcal{L}_{\zeta}M$ the component of $\mathcal{L}M$ with loops in the free homotopy class $\zeta$ (i.e. $\mathcal{L}_{\zeta}M=p_{\pi_0}^{-1}(\zeta)$ is the preimage of $\zeta$ under the natural projection $p_{\pi_0}\colon \mathcal{L}M \rightarrow \pi_0(\mathcal{L}M)$). 

Consider the connected abelian principal covering $p \colon \widetilde{\mathcal{L}}_{\zeta}M \rightarrow \mathcal{L}_{\zeta}M$ with structure group 
\begin{eqnarray}\label{eqn:Gamma_z}
\Gamma_{\zeta}:= \frac{\pi_1(\mathcal{L}_{\zeta}M)}{\ker \overline{c_1} \cap \ker \varrho}
\end{eqnarray}
where $\overline{c_1}$ and $\varrho$ are considered as homomorphisms $\pi_1(\mathcal{L}_{\zeta}M)\rightarrow \reals$ and we have $p^*\varrho=0=p^*\overline{c_1}\in H^1(\widetilde{\mathcal{L}}_{\zeta}M;\reals)$. These homomorphisms induce homomorphisms on $\Gamma_{\zeta}$:
\begin{eqnarray*} %\label{eqn:z_c1varphi}
\overline{c_1}\colon \Gamma_{\zeta} \rightarrow \Z \quad \text{and} \quad \varrho\colon \Gamma_{\zeta} \rightarrow \reals.
\end{eqnarray*}

Define the \emph{$\zeta$-minimal Chern number} $c_{1,\zeta}^{\text{min}}\in \N$ by 
\begin{eqnarray}\label{eqn:c1minzeta}
c_{1,\zeta}^{\min}\Z= \left<\overline{c_1},H_1(\mathcal{L}_{\zeta}M;\Z)\right> \subseteq \Z
\end{eqnarray}
with the convention $c_{1,\zeta}^{\text{min}}=\infty$ if the image of $\overline{c_1}\colon H_1(\mathcal{L}_{\zeta}M;\Z)\xrightarrow{} \Z$ is $0$. The number $c_{1,\zeta}^{\text{min}}$ depends only on $(M,\omega, \zeta)$. 

\begin{remark}
	If $\zeta$ is trivial, the minimal Chern number $c_{1,\zeta=0}^{\min}$ is the usual minimal Chern number $c_{1}^{\min}$ as defined in \eqref{eqn:minimalChernnumb} but in general it is smaller. Indeed the diagram
	
	\begin{equation}\label{diagram:c1}
	\xymatrix{
		{H_1(\mathcal{L}_{\zeta}M;\Z)} \ar[r]^{\quad\quad\overline{c_1}}& {\Z}  \\
		{\pi_1(\mathcal{L}_{\zeta}M)}   \ar[u]& {\pi_2(M)} \ar[l]  \ar[u]^{c_1} .
	}
	\end{equation}
	commutes, where, if we denote by $\Omega(M,z(0))$ the space of based pointed loops with base point $z(0)$, the bottom mapping $\pi_2(M)=\pi_1(\Omega(M,z(0))\rightarrow \pi_1(\mathcal{L}_{\zeta}M)$ is induced from the mapping $\Omega(M,z(0))\rightarrow \mathcal{L}M$ given by concatenating a loop based at $z(0)$ with z. Then $c_{1,\zeta}^{\min}<\infty$ if and only if $c_{1}^{\min}<\infty$ and, in this case, $c_{1,\zeta}^{\min}$ divides $c_{1}^{\min}$ (see also Remark~\ref{rmk:nu}).
\end{remark}

We interpret a class in $H^1(\mathcal{L}_{\zeta}M;\Z)$ as a linear combination of tori, i.e., maps $v\colon S^1\times S^1 \rightarrow M$ such that the homotopy class of $v(s,\cdot)\colon S^1 \rightarrow M$ is $\zeta,$ for all $s\in S^1$. The cohomology classes $\varrho$ and $\overline{c_1}$ are said strictly $\zeta$-toroidally proportional if 
\[\varrho|_{\pi_1(\mathcal{L}_{\zeta}M)}=\lambda_{\zeta}\overline{c_1}|_{\pi_1(\mathcal{L}_{\zeta}M)},\]
for some $\lambda_{\zeta}>0$, and $\varrho|_{\pi_1(\mathcal{L}_{\zeta}M)}\not=0\not= \overline{c_1}|_{\pi_1(\mathcal{L}_{\zeta}M)}$ (see also~\cite[Section~8]{PolShe15}).
In fact, if $\varrho$ and $\overline{c_1}$ are also toroidally proportional, we have $\lambda_{\zeta}=\lambda_{\varrho}$ and moreover 
\[
\left<\varrho, \pi_1(\mathcal{L}_{\zeta}M) \right>=h_{\zeta}\Z
\]
for some non-negative $h_{\zeta}\in \reals$. Observe that, in this case,  $c^{\min}_{1,T}\not=0$ divides $c^{\min}_{1,\zeta}<\infty$ and, hence, we have 
\begin{eqnarray}\label{eqn:h_zeta}
h_{\zeta}=\frac{c_{1,\zeta}^{\min}}{c_{1,T}^{\min}}h_{\varrho},
\end{eqnarray}
with $h_{\zeta}={c_{1,\zeta}^{\min}}/{c_{1,T}^{\min}}h_{\varrho}\in \N$. 

\begin{remark}\label{rmk:nut=1}
If $\varrho$ and $\overline{c_1}$ are strictly toroidally proportional and $\zeta$-toroidally proportional, then, when $\nu_T=1$ (recall \eqref{eqn:nut}), $c_{1}^{\min}=c_{1,T}^{\min}=c_{1,\zeta}^{\min}$.
\end{remark}

\subsubsection{The covering space $\widetilde{\mathcal{L}}_{\zeta}M$.}\label{section:decrp_cov_sp}
	Consider the space of pairs $(x,v)$ where $x\colon S^1 \rightarrow M$ is a loop in $M$ with homotopy class $\zeta$ and $v$ a cylinder (i.e. $v\colon [0,1]\times S^1 \rightarrow M$ is a homotopy) connecting $x$ to $z$. The cylinder $v$ is called a capping of $x$. 
	\begin{remark}
		Observe that if $\zeta$ is the trivial homotopy class and $z$ is a constant loop then $v$ can be viewed as a disc in $M$.
	\end{remark}
	We say that $(x,v)$ is in relation with $(x',v')$ if the following conditions hold:
\[x=x{'},\quad\quad
\displaystyle\int_{S^1\times S^1} u^*\eta =0
\]
and
\[
	\int_{S^1\times S^1} u^*\omega = \int_{S^1} {u_0}^*\theta
	\]
	where $u$ is the two-torus obtained by attaching $v$ and $-v'$ (that denotes $v'$ with reversed orientation) to each other, $\eta$ is a $2$-form representing the first Chern class $c_1$ and $u_0=u|_{S^1 \times \{0\}}$ (see Figure~\ref{figure:u0}). The equivalence class $\overline{x}=[x,v]$ is called a lift of the loop $x\in\mathcal{L}_{\zeta}M$ and the space of such equivalence classes is denoted by $\widetilde{\mathcal{L}}_{\zeta}M$.
	
\begin{figure}[htb!]
	\centering
	\def\svgwidth{150pt}
	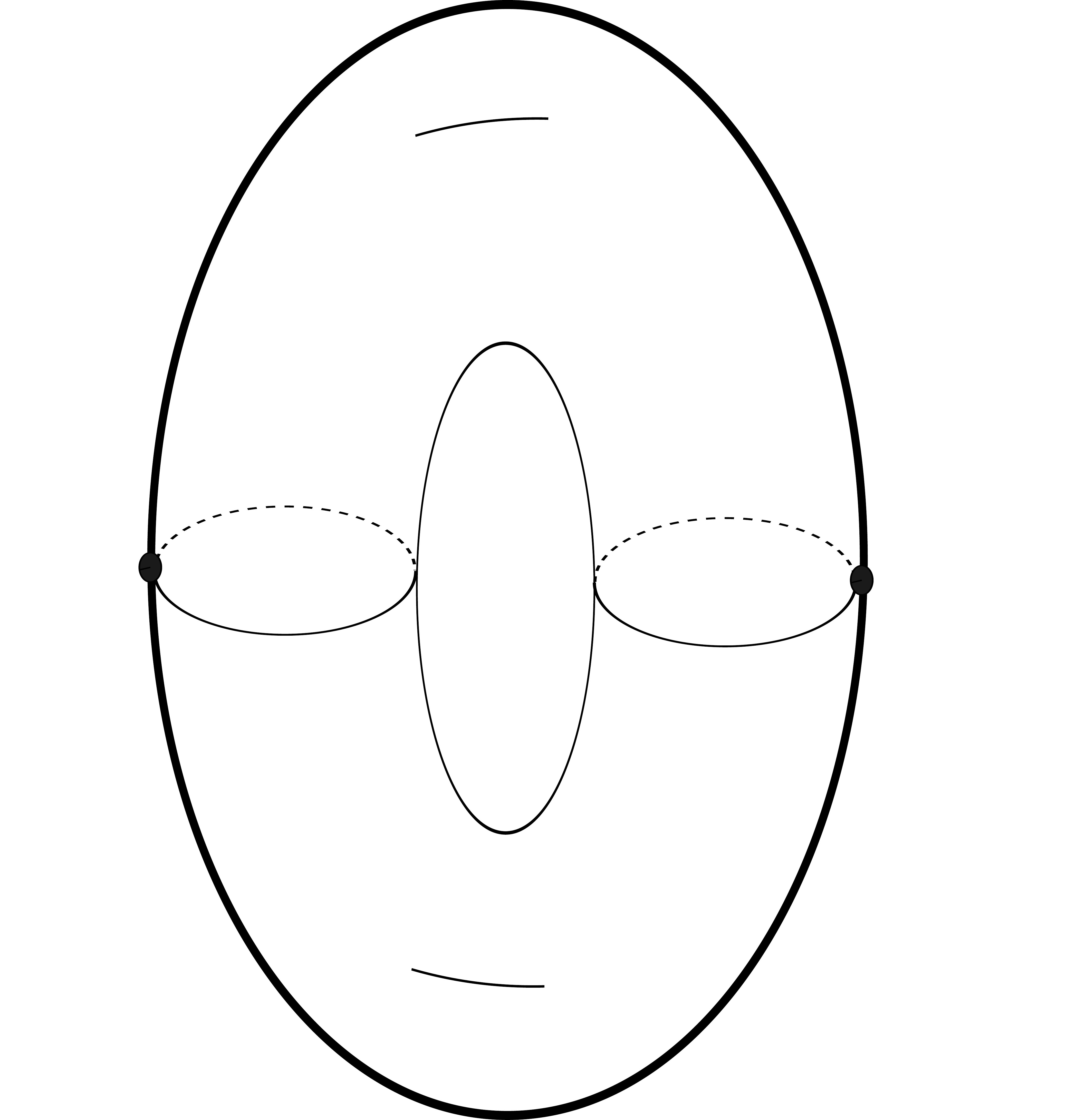
	\caption{The circle $u_0$.}\label{figure:u0}
\end{figure}
	
	\subsection{Periodic orbits and the action functional}\label{section:po_af} In this section, we follow \cite{BH01_non-con},  \cite{LO_fixedpts95} and references there in.

	Let $\phi\in \text{Symp}_0(M)$, $\phi_t$ be a symplectic isotopy connecting $\phi_0=id$ to $\phi_1=\phi$ with $\text{Flux}([\phi_t])=:[\theta]$ and $X_t$ be the vector field associated with $\phi_t$, i.e. 
	\[
	\frac{d}{dt}\phi_t = X_t \circ \phi_t.
	\]
	Recall that $\text{Flux}\colon \widetilde{\text{Symp}_0}(M,\omega) \rightarrow H^1(M,\reals)$ is the homomorphism defined by
	\begin{eqnarray}\label{eqn:flux}
	[\phi_t] \mapsto \left[ \int_0^1 \omega (X_t, \cdot) dt\right]
	\end{eqnarray}
	and its kernel is given by $\widetilde{\text{Ham}}(M,\omega)$, i.e. the universal covering of the group of Hamiltonian diffeomorphisms.

L\^e and Ono proved in \cite[Lemma~2.1]{LO_fixedpts95} that we can deform $\{\phi_t\}$ through symplectic isotopies (keeping the end points fixed) so that the cohomology classes $[\omega(X^{\prime}_t,\cdot)]$, for all $t\in[0,1]$, and $\text{Flux}([\phi'_t])=[\theta]$ are the same (where $X^{\prime}_t$ is the vector field associated with the deformed symplectic isotopies $\phi'_t$). Namely, each element in $\widetilde{\text{Symp}_0}(M,\omega)$ admits a representative symplectic isotopy generated by a smooth path of closed $1$-forms $\theta_t$ on $M$ whose cohomology class is identically equal to the flux, i.e.   
\begin{eqnarray}\label{eqn:Ham}
-\omega(X^{\prime}_t,\cdot)= \theta + dH_t=:\theta_t
\end{eqnarray}
for some one-periodic in time Hamiltonian $H_t\colon M \rightarrow M, \;t\in S^1$. 

The fixed points of $\phi=\phi_1$ are in one-to-one correspondence with one-periodic solutions of the differential equation
\begin{eqnarray}\label{eqn:de_theta}
\dot{x}(t)=X_{\theta_t}(t,x(t))
\end{eqnarray}
where $X_{\theta_t}$ is defined by $\omega(X_{\theta_t},\cdot)=-\theta_t$. From now on we denote the vector field $X_{\theta_t}$ by $Z_t$.

The set of one-periodic solutions of \eqref{eqn:de_theta} is denoted by $\mathcal{P}(\theta_t)$ and coincides with the zero set of the closed $1$-form $\alpha:=\alpha_{\{\phi_t\}}$ defined on the loop space of $M$, $\mathcal{L}M$, by
\begin{eqnarray*}\label{eqn:mv-af}
\alpha_{\{\phi_t\}}(x,\xi)&=& \displaystyle\int_0^1 \omega(\dot{x}(t)-Z_t(x(t)), \xi) dt\\
&=&\displaystyle\int_0^1 \omega(\dot{x}(t), \xi) + \theta_t(x(t))(\xi) dt\\
&=& \displaystyle\int_0^1 \omega(\dot{x}(t), \xi) dt + \displaystyle\int_0^1 (\theta+ dH_t)(x(t))(\xi) dt
\end{eqnarray*}
where $x\in \mathcal{L}M$ and $\xi\in T_x\mathcal{L}M$ (i.e. $\xi$ is a tangent vector field along the loop $x$).	
	
From \eqref{eqn:Gamma_z} we then see that $p^*\alpha=d\CA$ for some $\CA\in C^{\infty}(\widetilde{\mathcal{L}}_{\zeta}M,\reals)$.
	
	Then an action functional $\CA=\mathcal{A}_{\{\phi_t\}}$ is given by 
	\begin{eqnarray}\label{eqn:action_flnov}
	\mathcal{A}_{\{\phi_t\}}([x,v])=-\int_{[0,1]\times S^1} v^*\omega +\displaystyle\int_0^1 \left[\left(\int_0^1 (v_t)^*\theta\right)+ H_t(x(t)) \right]dt
	\end{eqnarray}
	where $[x,v]\in \widetilde{\mathcal{L}}_{\zeta}M$ is as in Section~\ref{section:decrp_cov_sp} and $v_t\colon [0,1]\rightarrow M$ is defined by $s \mapsto v(s,t)$ (see Figure~\ref{figure:vt}).
	
		\begin{figure}[htb!]
			\centering
			\def\svgwidth{100pt}
			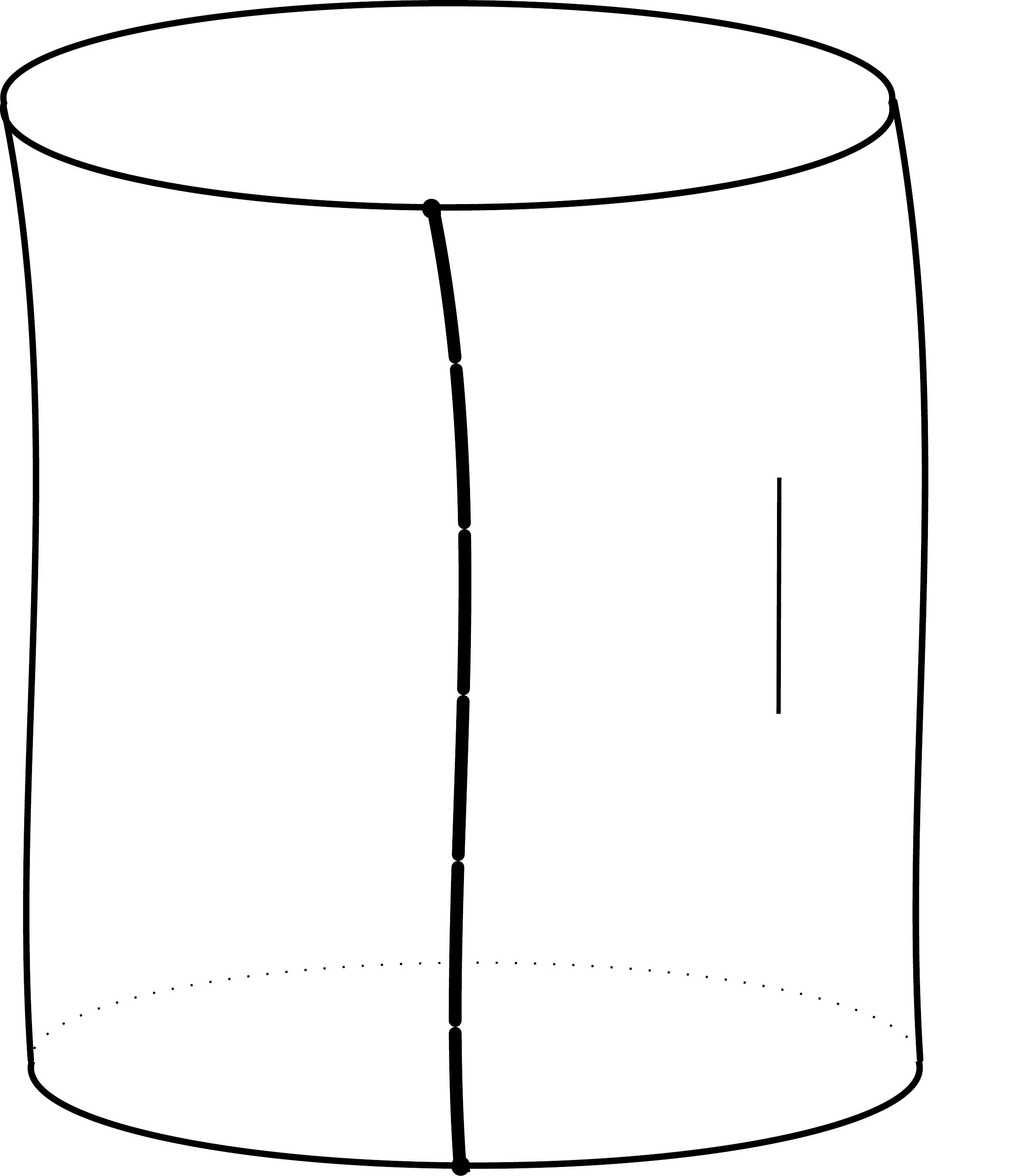
			\caption{The segment $v_t$.}\label{figure:vt}
		\end{figure}
		
	The action functional $\mathcal{A}_{\{\phi_t\}}$ is homogeneous with respect to iterations in the following sense
	\[
	\mathcal{A}_{\{\phi_t^{ k}\}}([x,v]^{k})=k\mathcal{A}_{\{\phi_t\}}([x,v]),
	\]
	where $[x,v]^{k}$ is the $k$-th iteration of $[x,v]$, and satisfies
	\[	\CA_{\{\phi_t\}}([x,v]\#A)=\CA_{\{\phi_t\}}([x,v]) + \varrho (A),
	\]
	where $A\in \pi_1(\mathcal{L}_{\zeta}M, z)$.
	
	\begin{remark}\label{rmk:action}
		\begin{enumerate}
			\item\label{rmk:action1} The action of a (lifted) periodic orbit depends on the initial choice of $z$. If $z'\in \zeta$, there is a homotopy $F\colon [0,1]\times S^1 \rightarrow M$ connecting $z$ to $z'$, i.e., $F(0, \cdot)=z$ and $F(1, \cdot)=z'$. Then $\CA_{\{\phi_t\}}([x,v]\#F)=\CA_{\{\phi_t\}}([x,v])+\CA(F)$ where $[x,v]\#F$ denotes the class $[x,w]$ with $w$ the cylinder obtained by concatenating $v$ with $F$ and		
			 \[\CA(F)=-\displaystyle\int_{[0,1]\times S^1} F^*\omega + \displaystyle\int_{0}^1 (\displaystyle\int_{0}^1 F_t^*\theta) dt\] with $F_t\colon [0,1] \rightarrow M$, $F_t(s):=F(t,s), t\in S^1.$ 
			\item Whenever we consider the $k$-th iteration $\phi_t^k$ of $\phi_t$ we simultaneously iterate the class $\zeta$ and the reference curve $z$ (i.e. we pass to $k\zeta$ and $z^k$, respectively).
		\end{enumerate}
	\end{remark}

	\subsection{The mean index and augmented action}	Let $x$ be a periodic orbit of $Z_t$. If the eigenvalues of the map
	\[
	d\phi_{x(0)}:T_{x(0)}M\rightarrow T_{x(0)}M
	\]
	are not equal to one, then the orbit $x$ is called \emph{non-degenerate}. If, in addition, none of the eigenvalues of the linearized return map $d\phi_{x(0)}$ has absolute value equal to one, then we say $x$ is \emph{hyperbolic}. The symplectomorphism $\phi$ is said to be non-degenerate if all its one-periodic orbits are non-degenerate. Moreover, if all periodic orbits of $Z_t$ with homotopy class $\zeta$ are non-degenerate, then the set of periodic orbits of $Z_t$ in $\mathcal{L}_{\zeta}M$, denoted by $\mathcal{P}_{\zeta}$, is finite.
	
	The homotopy class $\zeta$ gives rise to a well defined, up to homotopy, $\C$-vector bundle trivialization of $x^*TM$ for every $x\in \mathcal{L}_{\zeta}M$ and, for a one-periodic orbit of $\phi$, the linearized flow along $x$
	\[
	d\phi_t\colon T_{x(0)}M \rightarrow T_{x(t)}M
	\]
	can be viewed as a symplectic path $\Phi\colon[0,1] \rightarrow \Sp(2n).$ Set $\overline{\mathcal{P}_{\zeta}}:= p^{-1}(\mathcal{P}_{\zeta})$. Then one gets a well defined mean index 
	$\D_{\phi_t}\colon \overline{\mathcal{P}_{\zeta}} \rightarrow \reals$ and, for non-degenerate orbits, the Conley--Zehnder index $\MUCZ([x,v])\in \Z$ satisfying
	\[
	\D_{{\{\phi_t^{k}\}}}([x,v]^{k})=k\D_{\{\phi_t\}}([\widetilde{x},v]),
	\]
	\[
	\D_{\{\phi_t\}}([x,v]\#A)=\D_{\{\phi_t\}}([x,v])- 2\overline{c_1}(A)
	\]
	and
	\[
	\MUCZ([x,v]\#A)=\MUCZ([x,v])-2\overline{c_1}(A) \quad\quad\text{(when } x\text{ is non-degenerate),}
	\]
	for every $[x,v]\in \overline{\mathcal{P}_{\zeta}}$ and $A\in \Gamma_{\zeta}$ (cf. \cite{BH01_non-con} and \cite{SZ92}). Moreover
	\begin{eqnarray}\label{eqn:mi_czi}
	0\not =|\D_{\{\phi_t\}}([x,v])-\MUCZ([x,v])|< n,
	\end{eqnarray}
	when $x$ is non-degenerate.
	The mean index $\D_{\{\phi_t\}} ([x,v])\in \reals$ measures, roughly speaking, the total angle swept by certain eigenvalues with absolute value one of the symplectic path $\Phi$ induced by the linearized flow and a trivialization as above.
	
	\begin{remark}\label{rmk:index}
		The Conley--Zehnder index depends on the initial choice of the trivialization of $TM$ along $z$; see \cite[Section~3]{BH01_non-con}.
	\end{remark}

	The \emph{augmented action} is defined by
	\begin{eqnarray}\label{eqn:augmentedaction}
	\underline{\mathcal{A}}_{\{\phi_t\}}([x,v]):= \mathcal{A}_{\{\phi_t\}}([x,v]) - \frac{\lambda_{\varrho}}{2} \D_{\{\phi_t\}}([x,v])
	\end{eqnarray}
	and it is homogeneous with respect to iterations in the following sense
	\[
	\underline{\mathcal{A}}_{\{\phi_t^{k}\}}([x,v]^{k})=k\underline{\mathcal{A}}_{\{\phi_t\}}([x,v]).
	\]
	\begin{remark}\label{rmk:augmented_independent}
		Note that the augmented action is independent of the lift of a loop, i.e.
		\[
		\underline{\mathcal{A}}_{\{\phi_t\}}([x,v])=\underline{\mathcal{A}}_{\{\phi_t\}}([x,v{'}])
		\]
		where $v$ and $v{'}$ are cylinders connecting $x$ to $z$. 
	\end{remark}
	
	\subsection{The Floer--Novikov homology}\label{section:fnhomol} 
	In this section, we revisit the definition of the Floer--Novikov homology for non-contractible periodic orbits following \cite{BH01_non-con}. Consider a smooth almost complex structure $J$ on $M$ compatible with $\omega$, i.e. such that 
	\[
	g(X,Y):=\omega(X,JY)
	\]
	defines a Riemannian metric on $M$. We will denote by $\mathcal{J}$ the set of almost complex structures compatible with $\omega$.
	Choose $J\in \mathcal{J}$ and let $\widetilde{g}$ denote the induced weak Riemannian metric on $\mathcal{L}M$ given by
	\[
	\widetilde{g}(X_x,Y_x)=\displaystyle\int_{S^1} g(X_x(t), Y_x(t)) dt,
	\]
	where $X_x$ and $Y_x$ are vector fields along $x$. A gradient flow line is a mapping $u\colon \reals\times S^1\rightarrow M$ satisfying
	\begin{eqnarray}\label{eqn:gradientflowline}
	\partial_s u + J(\partial_t u - Z_t(u(s,t)))=0.
	\end{eqnarray}
	The maps $u\colon \reals \rightarrow \mathcal{L}M$ which satisfy \eqref{eqn:gradientflowline} with boundary conditions
	\begin{eqnarray}\label{eqn:boundconds_lim_tilde}
	\displaystyle\lim_{s\rightarrow\pm\infty} u(s,t) =  x_{\pm}(t)\in \mathcal{P}_{\zeta}
	\end{eqnarray}
	can be seen as connecting orbits between $\overline{x}_{-}:= [x_{-}, v_{-}]$ and $\overline{x}_{+}:= [{x}_{+}, v_{+}]$ in $\widetilde{\mathcal{L}}_{\zeta}M$ where 
	\begin{eqnarray}\label{eqn:transform_cap}
	v^{-}\#u=v^{+}.
	\end{eqnarray} 
	We denote by $\CM(\overline{x}_{-},\; \overline{x}_{+})$ the space of finite energy solutions of (\ref{eqn:gradientflowline}--\ref{eqn:boundconds_lim_tilde}) that transform \emph{cappings} as in \eqref{eqn:transform_cap}. The energy of a connecting orbit in this space is given by  
	\[
	E(u):= \displaystyle\int_{\reals\times S^1} |\p_s u|^2_{g} dsdt=\CA_{\{\phi_t\}}(\overline{x}_{+})-\CA_{\{\phi_t\}}(\overline{x}_{-}).
	\]
	
	For any $\overline{x}_{-},\;\overline{x}_{+}\in \overline{\mathcal{P}_{\zeta}},$ the space $\CM(\overline{x}_{-},\; \overline{x}_{+})$ is a smooth manifold of dimension $\MUCZ(\overline{x}_{+})-\MUCZ(\overline{x}_{-})$. It admits a natural $\reals$-action given by reparametrization. For $\overline{x}_{-},\; \overline{x}_{+} \in \overline{\mathcal{P}_{\zeta}}$ such that $\MUCZ(\overline{x}_{+})-\MUCZ(\overline{x}_{-})=1$, we have that $\CM(\overline{x}_{-},\; \overline{x}_{+})/ \reals$ is finite and set
	\[
	n_2(\overline{x},\; \overline{y}):= \# \CM(\overline{x},\; \overline{y}) \quad \text{modulo two}.
	\]
	
	Denote by $\overline{\mathcal{P}_{\zeta,k}}$ the subset of $\overline{\mathcal{P}_{\zeta}}$ whose elements $\overline{x}$ satisfy
	${\MUCZ(\overline{x})=k}$. Consider the chain complex where the $k$-th chain group $C_k$ consists of all formal sums
	\begin{eqnarray*}
		\sum \xi_{\overline{x}}\cdot \overline{x}
	\end{eqnarray*}
	with $\overline{x}\in \overline{\mathcal{P}_{\zeta,k}},\;\xi_{\overline{x}}\in \mathbb{Z}_2$ and such that, for all $c\in \reals$, the set
	\[
	\big{\{}\overline{x}\;|\;\xi_{\overline{x}}\not=0,
	\;\mathcal{A}_{\{\phi_t\}}(\overline{x})>c\big{\}}
	\]
	is finite. For a generator $\overline{x}$ in $C_k$, the boundary operator $\partial_k$ is defined as follows
	\[
	\partial_k(\overline{x})=\displaystyle\sum_{\MUCZ(\overline{y})=k-1} n_2(\overline{x},\overline{y})\overline{y}.
	\]
	The boundary operator $\partial$ satisfies $\partial^2=0$ and we have the Floer--Novikov homology groups
	\begin{eqnarray*}
	HFN_k([\theta], \zeta)=\frac{\ker\partial_k}{\im \partial_{k+1}}.
	\end{eqnarray*}
	
	\begin{remark}[\cite{GG:hyp12}]
The pair-of-pants product is not defined on the "non-contractible" Floer--Novikov homology for an individual class $\zeta$. Such a product between $HF_*([\theta], \zeta_1)$ and $HF_*([\theta], \zeta_2)$ takes values in $HF_*([\theta], \zeta_1+\zeta_2)$.
	\end{remark}
	
	\subsubsection{The Filtered Floer--Novikov homology}
	The total chain Floer complex $C_*=\colon$ $C_*^{(-\infty, \infty)}$ admits a filtration by $\reals$. Let $\CS$ be the set of critical values of the functional $\mathcal{A}_{\{\phi_t\}}$ (defined in \eqref{eqn:action_flnov}) which is called the \emph{action spectrum} of $\{\phi_t\}$. For each
	$b\in(-\infty,\infty]$ outside $\CS$, the chain complex $C_*^{(-\infty, b)}$ is generated by lifted loops $\overline{x}$ with action $\A_{\{\phi_t\}}(\overline{x})$ less than~$b.$ For $-\infty\leq a < b \leq \infty$ outside $\CS,$ set
	\[
	C_*^{(a,b)} :=C_*^{(-\infty,b)} / C_*^{(-\infty,a)}.
	\]
	The boundary operator $\p\colon C_*\rightarrow C_{*-1}$ descends to $C_*^{(a,b)}$ and hence the \emph{filtered Floer--Novikov homology} $HFN_*^{(a,b)}(\theta_t, \zeta)$ is well defined.
	
	This construction also extends to all symplectomorphisms in $\text{Symp}_0(M,\omega)$ (see \cite[Sections~3~and~4]{BH01_non-con}). For a path $\{\phi_t\}$ connecting an arbitrary $\phi\in\text{Symp}_0(M,\omega)$ to the identity, set
	\begin{eqnarray}\label{eqn:homol_deg}
	HFN_*^{(a,b)}(\theta_t, \zeta)\colon=
	HFN_*^{(a,b)}(\theta_t^{\prime}, \zeta)
	\end{eqnarray}
	where $\theta_t$ is the $1$-form associated with the path $\{\phi_t\}$ and $\theta_t^{\prime}$ is the $1$-form associated with a non-degenerate perturbation $\{\phi'_t\}$ of $\{\phi_t\}$. Here $-\infty\leq a<b\leq \infty$ are outside the action spectrum of $\{\phi_t\}$.

	\begin{remark}
		Considering Remark~\ref{rmk:action}~\ref{rmk:action1} and Remark~\ref{rmk:index}, observe that the filtered Floer--Novikov homology when $\zeta=0$ is a shift of the standard grading and filtration in the (contractible) Floer--Novikov homology (considered in \cite{O_flux06}).
	\end{remark}
	
	\subsection{Novikov rings and quantum homology}
	In this section we introduce the Novikov rings used in this paper in order to define the quantum homology and the quantum product action. Here we follow \cite{GG:hyp12} and references therein.  
	\subsubsection{The Novikov ring $\Lambda$ and quantum homology with coefficients in $\Lambda$.}\label{section:novikov_L}	Consider the Novikov ring $\Lambda:=\Lambda(\Gamma, [\omega], \F)$ associated with $\Gamma$ and the weighting homomorphism $[\omega]\colon \Gamma \rightarrow \F$ with values in the field $\F=\Z_2$ (see \cite{HS95}). Here the group $\Gamma$ is the quotient
	\[
	\Gamma:= \frac{\pi_2(M)}{\ker c_1 \cap \ker [\omega]}
	\]
	where $[\omega]\colon \pi_2(M)\rightarrow \reals$ and $c_1\colon \pi_2(M)\rightarrow \Z$ are the maps given by the integration of the corresponding $2$-forms. These maps descend to $\Gamma$. When $M$ is strictly monotone, the Novikov 
	ring $\Lambda$ may simply be taken to be the group algebra $\F[\Gamma]$ over $\F$.
	Moreover, in this case, the group $\Gamma$ is isomorphic to $\Z$ and denote by $A_0$ the generator of $\Gamma$ with $c_1(A_0)=-2c_1^{\text{min}}$. 
	Furthermore, an element in $\Lambda=\F[\Gamma]$ is a formal finite linear combination of $\alpha_A e^{A}$, with $\alpha_A\in \F$ and $A\in \Gamma$, and $\Lambda$ is graded by setting $\deg(e^{A})=-2c_1(A)$ with $A\in \Gamma$. Hence, setting $q=e^{A_0}$, we have $\deg(q)=-2c_1^{\text{min}}$ and $\Lambda=\F[q,q^{-1}]$. \\
	
	The \emph{quantum homology} of $M$ is defined 
	as 
	\[
	HQ_*(M)= H_*(M)\otimes\Lambda
	\]
	where the degree of a generator $\alpha \otimes e^{A}$, with $\alpha\in H_*(M)$ and $A\in \Gamma$, is $\deg(\alpha)+2c_1(A)$. The product structure is given by the quantum product:
	\[
	\alpha*\beta =\displaystyle\sum_{A\in \Gamma} (\alpha*\beta)_A e^A
	\]  
	where $(\alpha*\beta)_{A}\in H_*(M)$ is defined via some Gromov--Witten invariants of $M$ and has degree ${\deg(\alpha)+\deg(\beta)-2n+2c_1(A)}$. Thus \[{\deg(\alpha*\beta)=\deg(\alpha)+\deg(\beta)-2n}.\] When $A=0$, $(\alpha*\beta)_0=\alpha\cap\beta$, where $\cap$ stands for the intersection product of ordinary homology classes.
	
	It suffices to restrict the summation to the negative cone $[\omega](A)\leq 0$ and we can write
	\[
	\alpha*\beta=\alpha\cap\beta+\displaystyle\sum_{k>0}(\alpha*\beta)_k q^k,
	\]
	where $\deg((\alpha*\beta)_k)=\deg(\alpha)+\deg(\beta)-2n+2c^{\min}_{1}k$ and the sum is finite.
	
	The unit in the algebra $HQ_*(M)$ is the fundamental class $[M]$ and, for $a\in \Lambda$ and $\alpha\in H_*(M)$, 
	\[a\alpha=(a[M])*\alpha\] where degree of $a\alpha$ is $\deg(a\alpha)=\deg(a)+\deg(\alpha)$. Then the ordinary homology $H_*(M)$ is canonically embedded in $HQ_*(M)$.

	The map $[\omega]\colon \Gamma \rightarrow \reals$ can be extended to $HQ_*(M)$ by
	\[
	[\omega](\alpha)=\max\{[\omega](A)\;|\; \alpha_{A} \not= 0 \}=\max\{-h_0k\;|\;\alpha_k\not=0\}
	\]
	where $\alpha=\sum\alpha_{A} e^A=\sum\alpha_k q^k$ and it satisfies
	\begin{eqnarray*}
		[\omega](\alpha+\beta)\leq\max\{[\omega](\alpha),[\omega](\beta)\}
	\end{eqnarray*}
	and
	\begin{eqnarray*}\label{eqn:I_omega_inequality_2}
	[\omega](\beta*\alpha)\leq [\omega](\alpha)+[\omega](\beta).
	\end{eqnarray*}
	
	\begin{example}\label{example:quantumprod}
		Consider $M=\CP^n,\;\alpha=[\CP^{n-1}]\in H_{2n-2}(M)$ and $\beta=[\text{pt}]\in H_{0}(M)$ (where $[\text{pt}]$ is the class of a point in $\CP^n$). The fact that there is a unique line through any two points is reflected in the identity $(\beta*\alpha)_A=[\CP^n]$ where $c_1(A)=c_1^{\min}$. Hence, in $HQ_*(\CP^n)$, $\beta*\alpha=q[\CP^n].$ 
		Similarly, for    
		$M= \CP^n\times \Sigma_{g\geq 2}$,
		\[
		\alpha=[\CP^{n-1}\times \Sigma_{g\geq 2}]\in H_{2n}(M)
		\] 
		and 
		\[
		\beta=[\text{pt} \times \Sigma_{g\geq 2}]\in H_{2}(M),
		\] 
		the quantum product $\beta*\alpha$ satisfies the homological condition \eqref{l:1}. (See \cite{MS12} for these and further computations.)
	\end{example} 
	
\begin{remark}\label{rmk:exampleManifolds}
	The manifold $\CP^n\times \Sigma_{g\geq 2}$ admits symplectomorphisms which need not be Hamiltonian diffeomorphisms and, as seen in Examples~\ref{example:firstchern} and ~\ref{example:quantumprod}, satisfies the conditions on the manifold of Theorem~\ref{maintheo}.
	
	Observe that $\CP^n$ also satisfies these conditions. However, $\CP^n$ is simply connected and hence (see~\eqref{eqn:flux}) has no symplectomorphism which is not a Hamiltonian diffeomorphism.
	
	Let us also point out that there are symplectic flows on $\Sigma_{g\geq 2}$ with only (finitely many) hyperbolic fixed points and no other periodic orbits (see e.g.~\cite[Exercise~14.6.1, Chapter~14]{KH95}). 
\end{remark}

	\subsubsection{\emph{The Novikov ring $\Lambda_{\zeta}$.}}\label{section:novikov_Lz} For a given homotopy class $\zeta$, consider the Novikov ring $\Lambda_{\zeta}:=\Lambda(\Gamma_{\zeta}, \varrho, \F)$ associated with $\Gamma_{\zeta}$ (defined in \eqref{eqn:Gamma_z}) and weighting homomorphism $\varrho\colon \Gamma_{\zeta} \rightarrow \reals$ (defined in \eqref{eqn:toroprop}) with values in the field $\F=\Z_2$. When the homomorphisms $\varrho$ and $\overline{c_1}\colon H_1(M;\Z)\rightarrow \reals$ are strictly toroidally proportional, the Novikov ring $\Lambda_{\zeta}$ may be taken to be the group algebra $\F[\Gamma_{\zeta}]$ over $\F$. For all $\zeta$,  
	\[
	\Lambda_{\zeta}=\F[q_{\zeta}, q_{\zeta}^{-1}]
	\]
	where $q_{\zeta}=e^{A_{\zeta}}$ and $A_{\zeta}$ is the element which generates $\Gamma_{\zeta}\cong \Z$ with degree $-2c^{\min}_{1,\zeta}$. This isomorphism holds under the hypotheses of the main theorem, namely, that $\varrho$ and $\overline{c_1}$ are $\zeta$-strictly toroidally proportional.

	\begin{remark}\label{rmk:nu}
		When $M$ is strictly spherically monotone, we have $c_{1}^{\text{min}}<\infty$. In this case, by diagram~\eqref{diagram:c1}, we have ${\text Im}(\pi_2(M)\xrightarrow{c_1}\Z) \subset {\text Im}(H_1(\mathcal{L}_{\zeta}M;\Z)\xrightarrow{\overline{c_1}}\Z) $. Hence $c_{1,\zeta}^{\text{min}}$ divides $c_{1}^{\text{min}}$ and $q=q^{\nu_{\zeta}}_{\zeta}$ with $\nu_{\zeta}=c_{1}^{\text{min}} / c_{1,\zeta}^{\text{min}}\in \N$. Moreover, under the conditions of Remark~\ref{rmk:nut=1} we have $q=q_{\zeta}$.
	\end{remark}
	
	\subsubsection{\emph{The quantum product action.}}\label{section:capproduct} 
	We describe an action of the quantum homology $HQ_*(M)$ on the filtered Floer--Novikov homology. We follow  \cite[Section~2.3]{GG:hyp12} in the Floer--Novikov setting; see \cite[Section~3]{LO_cup96} for more details. 
	
	Let $\phi$ be a non-degenerate symplectomorphism, $J$ be a generic almost complex structure and $[\sigma]$ be a class in $H_*(M)$. Denote by $\mathcal{M}(\overline{x},\overline{y};\sigma)$ the moduli space of solutions $u$ of \eqref{eqn:gradientflowline}--\eqref{eqn:boundconds_lim_tilde} that transform cappings as in \eqref{eqn:transform_cap} with $\overline{x}, \overline{y}\in \overline{\mathcal{P}}_{\zeta}$ and such that $u(0,0)\in \sigma$ where $\sigma$ is a generic cycle representing $[\sigma]$.
	
	Then the dimension of this moduli space is
	\[
	\dim \;\mathcal{M}(\overline{x},\overline{y};\sigma)=\MUCZ(\overline{x})-
	\MUCZ(\overline{y})
	-\text{codim}(\sigma).
	\]
	
	Let $m(\overline{x},\overline{y};\sigma)\in\Z_2$ be $\# \mathcal{M}(\overline{x},\overline{y};\sigma)$ modulo 2 when this moduli space is zero-dimensional and zero otherwise. For any $c,\;c'\not\in \mathcal{S}$, there is a map
	\[
	\Phi_{\sigma}:C_*^{(c,c')}\rightarrow C_{*-\text{codim}(\sigma)}^{(c,c')}
	\]
	induced by
	\[
	\Phi_{\sigma}(\overline{x})=\displaystyle\sum_{\overline{y}} m(\overline{x},\overline{y};\sigma)\overline{y}.
	\]
	This map commutes with the Floer--Novikov differential $\p$ and descends (independently of the choice of the cycle representing the class $[\sigma]$) to a map
	\[
	\Phi_{[\sigma]}:HFN_*^{(c,c')}(\theta_t, \zeta)\rightarrow HFN_{*-\text{codim}(\sigma)}^{(c,c')}(\theta_t, \zeta).
	\]
	The action of a class $\alpha=q^{k}[\sigma]$ is induced by the map 
	\begin{eqnarray}\label{eqn:action_induced}
	\Phi_{q^{k}\sigma}(\overline{x}):=\displaystyle\sum_{\overline{y}}m(q^{k}\overline{x}, \overline{y}; \sigma)\overline{y}
	\end{eqnarray}
	
	Recall that, by Remark~\ref{rmk:nu}, $q=q_{\zeta}^{\nu_{\zeta}}$ with $\nu_{\zeta}\in \N.$ Then, in \eqref{eqn:action_induced}, the element $q^{k}\overline{x}$, with $\overline{x}=[x,v]$, is the lift $[x,w]$ of $x$ where $w$ is obtained by attaching $k\nu_{\zeta} A_{\zeta}$ to $v$.
	
	The map $\Phi_{\alpha}$ can be extended to all $\alpha\in H_*(M)\otimes\Lambda$ by linearity over $\Lambda$ and then we have 
	\begin{eqnarray*}\label{eqn:action_quantum}
	\Phi_{\alpha}: HFN_*^{(c,c')}(\theta_t, \zeta)\rightarrow HFN_{*-2n+\deg(\alpha)}^{(c,c')+\varrho(\alpha)}(\theta_t,\zeta).
	\end{eqnarray*}
	
	\begin{remark}\label{rmk:varphi_zeta} Note that the map $\varrho\colon \Gamma_{\zeta} \rightarrow \reals$ extends to $ H_{*}(M)\otimes \Lambda $ as
		\[
		\varrho(\alpha):=\max\{-k\nu_{\zeta}h_{\zeta}\;|\;\alpha_k\not=0\}
		=\max\{-k\nu_{T}h_{\varrho}\;|\;\alpha_k\not=0\}
		\]
		
		where $\alpha=\sum\alpha_k q^k=\sum\alpha_k q_{\zeta}^{k\nu_{\zeta}}$. The above equality follows from 
		\begin{eqnarray}\label{eqn:nus}
		\nu_{\zeta}h_{\zeta} = \nu_{\zeta} \frac{c_{1,\zeta}^{\min}}{c_{1,T}^{\min}}h_{\varrho}=\frac{c_{1}^{\min}}{c_{1,T}^{\min}}h_{\varrho}=\nu_{T}h_{\varrho} 
		\end{eqnarray}
		which holds by \eqref{eqn:h_zeta}, Remark~\ref{rmk:nu} and \eqref{eqn:nut}.
		
		  We have
		\begin{eqnarray*}%\label{eqn:varphi_inequality_1}
			\varrho(\alpha+\beta)\leq\max\{\varrho(\alpha), \varrho(\beta)\}
		\end{eqnarray*}
		and
		\begin{eqnarray}\label{eqn:varphi_inequality_2}
		\varrho(\alpha*\beta)\leq \varrho(\alpha)+\varrho(\beta). 
		\end{eqnarray}
	\end{remark}
	
	The maps $\Phi_{\alpha}$, for all $\alpha\in HQ_*(M)$, give an action of the quantum homology on the filtered Floer--Novikov homology.\\
	
	This action has the following properties:
	\[
	\Phi_{[M]}=id
	\]
	and
	\begin{eqnarray}\label{eqn:multip_quantum}
	\Phi_{\beta}\Phi_{\alpha}=\Phi_{\beta*\alpha}.
	\end{eqnarray}
	
	\begin{remark}
		Observe that in the multiplicativity property \eqref{eqn:multip_quantum}, in general, the maps on both side of the equality have different target spaces. We should understand the identity in \eqref{eqn:multip_quantum} as that the following diagram commutes for any interval $(d,d^{'})$ with  $d\geq c+\varrho(\alpha)+\varrho(\beta)$ and $d^{'}\geq c'+\varrho(\alpha)+\varrho(\beta)$:
		\begin{equation}\label{diagram:cap}
		\xymatrix{
			{HFN_*^{(c,c^{\prime})}(\theta_t,\zeta)} \ar[r]^{\Phi_{\alpha}}\ar[dr]_{\Phi_{\beta*\alpha}}
			& {HFN_{*-2n+\deg(\alpha)}^{(c,c^{\prime})+\varrho(\alpha)}(\theta_t,\zeta)} \ar[r]^{\Phi_{\beta}} & {HFN_{*-4n+\deg(\alpha)+\deg(\beta)}^{(c,c^{\prime})+\varrho(\alpha)+\varrho(\beta)}}(\theta_t,\zeta) \ar[d] \\
			{}                     & {HFN_{*-2n+\deg(\beta*\alpha)}^{(c,c^{\prime})+\varrho(\beta*\alpha)}(\theta_t,\zeta)}             \ar[r]   &  {HFN_{*-2n+\deg(\beta*\alpha)}^{(d,d^{'})}(\theta_t,\zeta)}
		}
		\end{equation} where the vertical and the bottom horizontal arrows are the natural quotient-inclusion maps whose existence is guaranteed by the choice of $d$ and $d^{'}$ and \eqref{eqn:varphi_inequality_2}.
	\end{remark}
	
	\section{Proof of the main theorem}\label{section:proof}
	
	\subsection{The Ball-Crossing Energy Theorem}\label{section:BCEthm} In this section, we state the theorem which corroborates the importance of the assumption on the hyperbolicity of the orbit $\gamma$. See \cite{GG:hyp12} for the original statement and proof of the theorem and a discussion on the result.   
	
	Let $\phi$ be a symplectomorphism (isotopic to the identity) on a symplectic manifold $(M,\omega)$ and fix a one-periodic in time almost complex structure $J$ compatible with $\omega$. For a closed domain $\Sigma \subset \reals \times S^1_k$ (i.e. a closed subset with non-empty interior), where $S^1_k=\reals/k\Z$, the \emph{energy} of a solution ${u:\Sigma \rightarrow  M}$ of the equation \eqref{eqn:gradientflowline} is, by definition,
	\[
	E(u):= \displaystyle\int_{\Sigma} |\p_s u|^2_{g} dsdt.
	\]
	
	Let $\gamma$ be a hyperbolic one-periodic solution of $Z_t$ in $M$ and $\overline{\gamma}\in \widetilde{\mathcal{L}}_{\zeta}M$ be a lift of $\gamma$. 
	
	A solution ${u:\Sigma \rightarrow M}$ of the equation \eqref{eqn:gradientflowline} is said to be \emph{asymptotic to} $\gamma^k$ as $s\rightarrow \infty$ if $\Sigma$ contains a cylinder $[s_0,\infty)\times S^1_k$ and $u(s,t)\rightarrow \gamma^k(t)$ $C^{\infty}$-uniformly in $t$ as $s\rightarrow \infty$.
	
	Consider a small closed neighborhood U of $\gamma$ with smooth boundary.
	\begin{theorem}[({\cite[Ball-Crossing Energy Theorem]{GG:hyp12}})]
		\label{thm:ballcrossing}
		\
		
		There exists a constant $c_{\infty}>0$ (independent of $k$ and $\Sigma$) such that for any solution $u$ of the equation \eqref{eqn:gradientflowline}, with $u(\partial\Sigma)\subset\partial U$ and $\partial\Sigma\not=\emptyset$, which is asymptotic to $\gamma^k$ as $s\rightarrow\infty$, we have
		\begin{eqnarray}\label{eqn:energy_bound}
		E(u)>c_{\infty}.
		\end{eqnarray}
		Moreover, the constant $c_{\infty}$ can be chosen to make \eqref{eqn:energy_bound} hold for all $k$-periodic almost complex structures (with varying $k$) $C^{\infty}$-close to $J$ uniformly on $\reals\times U$.
	\end{theorem}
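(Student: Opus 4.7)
The plan is to establish the energy bound by a local monotonicity-type estimate near $\gamma$, whose uniformity in the iteration index $k$ is guaranteed by the hyperbolicity of the linearized return map.

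I would first shrink $U$ so that it sits inside a tubular neighborhood of $\gamma$ equipped with Darboux coordinates in which $\phi$ and $J$ are $C^1$-close to their linearizations along $\gamma$. Since $\gamma$ is hyperbolic, $d\phi_{\gamma(0)}$ has no eigenvalues on the unit circle; the iterate $d\phi^k_{\gamma(0)}$ raises these eigenvalues to the $k$-th power, so $\gamma^k$ is at least as strongly hyperbolic as $\gamma$. Standard exponential-decay estimates for Floer trajectories at non-degenerate asymptotic orbits, with rate governed by this spectral gap, then give that any solution $u$ asymptotic to $\gamma^k$ enters an arbitrarily small tube $U_\epsilon$ around $\gamma^k$ beyond some $s_0$, at a rate which is uniform in $k$.

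Next, I would argue by contradiction. Assume a sequence $u_n : \Sigma_n \to M$ with asymptotic orbits $\gamma^{k_n}$, $u_n(\partial\Sigma_n) \subset \partial U$, and $E(u_n) \to 0$. Fix a small $\epsilon > 0$ and use the uniform decay to pick $s_n$ with $u_n([s_n,\infty) \times S^1_{k_n}) \subset U_\epsilon$. Set $A_n := u_n^{-1}(U \setminus \mathrm{int}\,U_\epsilon) \subset \Sigma_n$, whose boundary consists of pieces mapped to $\partial U$ and pieces mapped to $\partial U_\epsilon$. Rewriting $E(u_n|_{A_n}) = \int_{A_n} |\partial_s u_n|_g^2 \, ds\, dt$ as the integral of a modified symplectic area and applying Stokes' theorem, one obtains a lower bound by an integral over the part of $\partial A_n$ lying over $\partial U$, which is positively bounded below by the fixed geometry of $U$, minus a correction from the part lying over $\partial U_\epsilon$ that tends to zero with $\epsilon$. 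This yields $E(u_n) \geq c_\infty > 0$ with $c_\infty$ depending only on $U$, $J$ and $\theta_t$, contradicting $E(u_n) \to 0$.

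The main obstacle is the uniformity in $k$: the Floer cylinder $\reals \times S^1_{k_n}$ grows with $k_n$, so a priori the total energy could spread thinly across time. Uniformity holds because the geometric data used in the monotonicity estimate ($J$, $\theta_t$, radius of $U$) are $k$-independent, the exponential-decay rate near $\gamma^k$ improves with $k$ thanks to hyperbolicity, and the annular estimate depends only on the image $u_n(A_n) \subset U$ and not on the time-length of the cylinder. The final assertion of the theorem, uniform validity for $k$-periodic almost complex structures $C^\infty$-close to $J$, follows because the monotonicity estimate is stable under $C^1$-small perturbations of $J$.
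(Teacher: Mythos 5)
First, a remark on scope: the paper does not prove Theorem~\ref{thm:ballcrossing} at all --- it is imported verbatim from \cite{GG:hyp12}, and the proof there is a target-local compactness argument: assuming a sequence of solutions with energy tending to zero, one extracts (after suitable shifts in $s$ and $t$) a limit which is a \emph{zero-energy} solution of \eqref{eqn:gradientflowline}, i.e.\ a map of the form $u_\infty(s,t)=y(t)$ with $y$ an integral curve of $Z_t$; the asymptotic condition forces this integral curve to remain in a small neighborhood of $\gamma$ over arbitrarily long time intervals while touching $\partial U$, and hyperbolicity of $\gamma$ (via the local stable/unstable manifold structure) rules such curves out. So your proposal is, in any case, a different route from the source.

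The route you propose has a genuine gap at its central step, the ``monotonicity-type'' lower bound over the shell $U\setminus\mathrm{int}\,U_\epsilon$. Monotonicity gives an area lower bound for $J$-holomorphic curves because for those, zero energy forces the curve to be constant. For solutions of the inhomogeneous equation \eqref{eqn:gradientflowline} the energy density is $|\partial_s u|^2=|\partial_t u - Z_t(u)|^2$, and it vanishes precisely on maps $u(s,t)=y(t)$ that follow an integral curve of $Z_t$. Such a flow line can perfectly well traverse the shell from $\partial U_\epsilon$ to $\partial U$ in time $\le k$, so the image of $u$ crossing the shell imposes \emph{no} positive lower bound on $\int|\partial_s u|^2$ coming from ``the fixed geometry of $U$.'' Equivalently, your Stokes computation expresses $E(u|_{A_n})$ as a difference of action-type boundary integrals, and loops lying on $\partial U$ need not have actions differing by a definite amount from loops lying on $\partial U_\epsilon$; in general they can be made arbitrarily close. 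This is exactly why the theorem is nontrivial and why hyperbolicity must enter globally --- to control which integral curves of the flow can stay near $\gamma$ for long times --- and not merely through the spectral gap at the asymptotic end, which is the only place you use it. (Your justification of the uniform decay rate is also slightly off: the relevant quantity is the spectral gap of the asymptotic operator on $k$-periodic loops, not the eigenvalues of $d\phi^k$ themselves; for hyperbolic orbits this gap is indeed uniformly bounded below in $k$, but that fact requires its own argument.) As it stands, the contradiction you derive from $E(u_n)\to 0$ does not go through, and the argument would ``prove'' a crossing-energy bound for elliptic orbits as well, contradicting the counterexample recalled in Remark~\ref{rmk:hyp}.
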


\begin{remark}\label{rmk:hyp}
	The assumption that the orbit is hyperbolic is essential in Theorem~\ref{thm:ballcrossing}. For instance, there are "non hyperbolic" examples where the ball-crossing energy can get arbitrary small for arbitrarily large iterations $k$; see~\cite[Remark~3.4]{GG:hyp12}.
\end{remark}
	
	\subsection{Proof of Theorem~\ref{maintheo}}
	Let $\zeta$ be the free homotopy class of the hyperbolic periodic orbit $\gamma$. By passing, if necessary, to the second iteration assume 
	\[
	d\phi\colon T_{\gamma(0)}M \rightarrow T_{\gamma(0)}M
	\]
	has an even number of real eigenvalues in $(-1,0)$. Then there exists a trivialization of $TM$ along $\gamma$ such that the mean index of $\gamma$ (and, since $\gamma$ is hyperbolic, the Conley--Zehnder index) is equal to zero. Moreover, add a constant to the associated hamiltonian $H_t$ (see \eqref{eqn:Ham}) so that the action of some lift of $\gamma$ is
	\begin{eqnarray}\label{eqn:action_gamma}
	\CA_{\{\phi_t\}}(\overline{\gamma})=0.
	\end{eqnarray} 
	
	We reason by contradiction and suppose $\phi$ has finitely many periodic points $x_1,\ldots,x_m$. 
	
	Consider an almost complex structure $J^{'}$ on $M$.  Let $U$ be a small closed neighborhood of $\gamma$ such that $\phi$ has no periodic orbit intersecting $U$ except $\gamma$. By the Ball-Crossing Energy Theorem, there exists a constant $c_{\infty}>0$ such that, for all $k$, the energy of any solution of \eqref{eqn:gradientflowline} of period $k$ asymptotic to $\gamma^k$ as $s\rightarrow\infty$ is greater than $c_{\infty}$.\\
	
	For each $i=1,\ldots,m$, fix a reference loop $z_i$ in $\zeta_i$ (e.g. take $z_i=x_i$) and consider a lift $\overline{x_i}:=[x_i, v_i]\in \widetilde{\mathcal{L}}_{\zeta_i}M$ of $x_i\in \mathcal{L}_{\zeta_i}M$, where $\zeta_i$ is the free homotopy class of $x_i$ and $v_i$ is a cylinder connecting $x_i$ to $z_i$. We have that
	\[
	a_i:=\mathcal{A}_{\{\phi_t\}}(\overline{x_i}) \; (\text{mod } h_{\varrho})\; \text{in}\;  S^1_{h_{\varrho}}\quad\quad\text{and}\quad\quad
	\underline{a_i}:=\underline{\mathcal{A}}_{\{\phi_t\}}(\overline{x_i})\] are independent of the considered lift of $x_i$. 
	
	\begin{remark}
	Observe that, for all $i=1,\ldots,m$, the difference between the actions of two lifts of $x_i$ is a multiple of $h_{\zeta_i}$ when $\varrho$ is rational in the sense of \eqref{eqn:varphi_rational}. Since, for all $i=1,\ldots,m,$ the constant $h_{\varrho}$ divides $h_{\zeta_i}$ (recall \eqref{eqn:h_zeta}) it follows that the difference between these action values is also a multiple of $h_{\varrho}$ and hence $\CA_{\phi_t}(\overline{x_i})$ (mod $h_{\varrho}$) is independent of the considered lift of $x_i$. The fact that the second expression is independent of the lift of $x_i$ follows from Remark~\ref{rmk:augmented_independent}.
	\end{remark}

	Take $\epsilon,\;\delta>0$ sufficiently small so that
	\begin{eqnarray}\label{eqn:epsdelt}
	2(\epsilon+\delta)<\lambda_{\varrho} \text{ and } \epsilon<c_{\infty}
	\end{eqnarray}
	and $C$ a sufficiently large constant  so that
	\begin{eqnarray}\label{eqn:constant_C}
	C> \nu_{T}h_{\varrho}+\frac{\lambda_{\varrho}}{2}(n+1).
	\end{eqnarray}
	Then, by Kronecker's Theorem (see e.g. \cite[Theorem~7.10]{Apostol90}), there exists $k$ such that for all $i=1,\ldots,m$
	\begin{eqnarray}\label{eqn:actioneps}
	||k a_i||_{h_{\varrho}}<\epsilon\quad
	\end{eqnarray} and \begin{eqnarray}\label{eqn:actionC} \text{either } |\underline{a_i}|=0 \text{ or } |k\underline{a_i}|>C.
	\end{eqnarray}
	where we denote by $||a||_{h}\in [0,h / 2]$ the distance from $a\in S^1_{h}=\reals/{h\Z}$ to $0$. Observe that $k$ depends on $\epsilon$ (and $\delta$) and $C$, hence it also depends on $c_{\infty}$ and also on the fixed neighborhood $U$.
	
	Consider a non-degenerate perturbation $\phi{'}$ of $\phi^k$ such that (\ref{eqn:homol_deg}) holds and such that the Hamiltonian $K$ associated with $\phi{'}$ (in the sense of \eqref{eqn:Ham}) satisfies the following properties:
	
	\begin{enumerate}\label{enumerate:K}
		\item \label{enumerate:K2}$K$ coincides with $H^{\natural k}$ on the neighborhood $U$,
		\item \label{enumerate:K3}$K$ is $k$-periodic and non-degenerate and
		\item \label{enumerate:K1}$K$ is sufficiently $C^2$-close to $H^{\natural k}$.
	\end{enumerate}
	Here, we consider $K$ sufficiently $C^2$-close to $H^{\natural k}$ in order to have the existence of $k$ such that for all $x$ $k$-periodic solution of $K$ 
	\begin{eqnarray}\label{eqn:action_epsbounds}
	\big\|\mathcal{A}^{h_{\varrho}}_{\{\phi_t^{'}\}}(\overline{x})\big\|_{h_{\varrho}}<\epsilon
	\end{eqnarray}
	and
	\begin{eqnarray}\label{eqn:augmented_bounds}
	\text{either } \big|\underline{\mathcal{A}}_{\{\phi_t^{'}\}}(\overline{x})\big|<\delta
	\text{ or } \big|\underline{\mathcal{A}}_{\{\phi_t^{'}\}}(\overline{x})\big|>C
	\end{eqnarray}
	where $\mathcal{A}^{h_{\varrho}}_{\{\phi_t^{'}\}}(\overline{x})$ stands for $\mathcal{A}_{\{\phi_t^{'}\}}(\overline{x})$ mod $h_{\varrho}$. Note that, as long as $\delta<h_{\varrho},$ conditions \eqref{eqn:action_epsbounds} and \eqref{eqn:augmented_bounds} follow from \eqref{eqn:actioneps} and \eqref{eqn:actionC}, respectively. Observe that if $\phi^k$ is non-degenerate, then we can take $\phi'=\phi^k$. 
	
	For any $k$-periodic almost complex structure $J$ sufficiently close to (the $k$-periodic extension of) $J'$, all non-trivial $k$-periodic solutions of the equation \eqref{eqn:gradientflowline} for the pair ($\phi'$, $J$) asymptotic to $\gamma^k$ as $s\rightarrow\infty$ have energy greater than $c_{\infty}$.
	
	\begin{lemma}\emph{{\cite[Lemma~4.1]{GG:hyp12}}}\label{lemma:tau}
		Let $\tau := C - \frac{\lambda_{\varrho}}{2}(n+1)$. The orbit ${\overline{\gamma}}^k$ is not connected by a solution of \eqref{eqn:gradientflowline} to any $\overline{x}\in \overline{\mathcal{P}_{k\zeta}}$  with Conley--Zehnder index $\pm 1$ and action in $(-\tau,\tau)$, where $\zeta$ is the free homotopy class of $\gamma$.
		
		In particular, ${\overline{\gamma}}^k$ is closed in $C^{(-\tau,\tau)}_*$ and $0\not=[{\overline{\gamma}}^k]\in HFN^{(-\tau,\tau)}_*(\theta^{\prime}_t, k\zeta)$. Moreover, ${\overline{\gamma}}^k$ must enter every cycle representing its homology class $[{\overline{\gamma}}^k]$ in $HFN^{(-\tau,\tau)}_*(\theta^{\prime}_t, k\zeta)$.
	\end{lemma}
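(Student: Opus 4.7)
The plan is to show that $\overline{\gamma}^k$ is isolated, inside the action window $(-\tau,\tau)$, from orbits of Conley--Zehnder index $\pm 1$ under the Floer--Novikov differential, after which the three remaining conclusions are immediate. Since $\gamma$ is hyperbolic with mean/Conley--Zehnder index $0$ in the chosen trivialization and $\CA_{\{\phi_t\}}(\overline{\gamma})=0$ by the normalization \eqref{eqn:action_gamma}, homogeneity under iterations yields $\MUCZ(\overline{\gamma}^k)=0$ and $\CA_{\{\phi_t^{'}\}}(\overline{\gamma}^k)=0$ (using that $K\equiv H^{\natural k}$ on the neighborhood $U\supset\gamma$ of the hyperbolic orbit). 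Thus $\overline{\gamma}^k$ is a generator of $C_0^{(-\tau,\tau)}$.

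Suppose, toward contradiction, that some Floer--Novikov trajectory $u$ connects $\overline{\gamma}^k$ to $\overline{x}\in\overline{\mathcal{P}_{k\zeta}}$ with $\MUCZ(\overline{x})=\pm 1$ and $\CA_{\{\phi_t^{'}\}}(\overline{x})\in(-\tau,\tau)$. Since $\MUCZ(\overline{\gamma}^k\#A)=-2\overline{c_1}(A)$ is even, $\overline{x}$ cannot be a recapping of $\overline{\gamma}^k$, so its underlying orbit is disjoint from $\gamma$ and $u$ must cross $\partial U$. Theorem~\ref{thm:ballcrossing} then gives $E(u)>c_\infty$, and the energy identity together with $\CA(\overline{\gamma}^k)=0$ yields $|\CA(\overline{x})|>c_\infty>\epsilon$. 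On the other hand, \eqref{eqn:mi_czi} gives $|\D(\overline{x})|<n+1$, so
\[
|\underline{\CA}(\overline{x})|\leq|\CA(\overline{x})|+\tfrac{\lambda_\varrho}{2}(n+1)<\tau+\tfrac{\lambda_\varrho}{2}(n+1)=C,
\]
which by the dichotomy \eqref{eqn:augmented_bounds} forces $|\underline{\CA}(\overline{x})|<\delta$. Writing $\CA(\overline{x})=\ell h_\varrho+r$ with $|r|<\epsilon$ per \eqref{eqn:action_epsbounds}, and using $h_\varrho=\lambda_\varrho\, c^{\min}_{1,T}$ from \eqref{eqn:c1minTprop}, I obtain
\[
\bigl|\D(\overline{x})-2\ell\, c^{\min}_{1,T}\bigr|\leq\tfrac{2(\epsilon+\delta)}{\lambda_\varrho}<1.
\]
If $\ell\neq 0$, the hypothesis $c^{\min}_{1,T}\geq\frac{n}{2}+1$ yields $|\D(\overline{x})|>2c^{\min}_{1,T}-1\geq n+1$, contradicting $|\D(\overline{x})|<n+1$. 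Hence $\ell=0$, which forces $|\CA(\overline{x})|<\epsilon<c_\infty$, contradicting the ball-crossing estimate.

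With the isolation established, the three remaining conclusions follow formally. No $\overline{y}$ of index $-1$ can appear in $\p\overline{\gamma}^k$ within the window, so $\overline{\gamma}^k$ is closed in $C^{(-\tau,\tau)}_*$; no $\overline{y}$ of index $+1$ in this window has $\overline{\gamma}^k$ in $\p\overline{y}$, so $[\overline{\gamma}^k]\neq 0$; and if a cycle $\sigma$ in $C^{(-\tau,\tau)}_*$ with $[\sigma]=[\overline{\gamma}^k]$ omitted $\overline{\gamma}^k$, then $\sigma+\overline{\gamma}^k\in\im\p$ would place $\overline{\gamma}^k$ in the image of $\p$ applied to some index-$1$ chain, again forbidden. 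The main obstacle is the arithmetic in the central step: the ball-crossing lower bound on $|\CA(\overline{x})|$, the augmented-action dichotomy, and the resonance relation $h_\varrho=\lambda_\varrho c^{\min}_{1,T}$ must be made to reinforce one another, and it is precisely the assumption $c^{\min}_{1,T}\geq n/2+1$ together with the smallness conditions \eqref{eqn:epsdelt} that drives $\ell$ to zero and produces the contradiction; dropping any one of these inputs would reopen the possibility of intermediate recappings.
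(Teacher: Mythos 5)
Your argument is correct and follows essentially the same route as the paper's proof: it combines the Ball-Crossing energy bound, the action/augmented-action pinching \eqref{eqn:action_epsbounds}--\eqref{eqn:augmented_bounds}, the relation $h_{\varrho}/\lambda_{\varrho}=c^{\min}_{1,T}$, the index gap \eqref{eqn:mi_czi}, and the hypothesis $c^{\min}_{1,T}\geq n/2+1$ in the same roles; you merely reorganize the case split (ruling out $|\underline{\CA}(\overline{x})|>C$ up front via $|\underline{\CA}(\overline{x})|<\tau+\tfrac{\lambda_{\varrho}}{2}(n+1)=C$, which is the contrapositive of the paper's second case) and land the final contradiction on the mean index rather than on $\MUCZ(\overline{x})=\pm1$.
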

	\begin{proof}
	Assume the orbit ${\overline{\gamma}}^k$ is connected, by a solution $u$ of \eqref{eqn:gradientflowline}, to some $\overline{x}\in \overline{P_{k\zeta}}$ with index $\MUCZ(\overline{x})=\pm 1$ with action in $(-\tau,\tau)$.
	
	Consider the first case in \eqref{eqn:augmented_bounds}, i.e. $\big|\underline{\mathcal{A}}_{\{\phi_t^{'}\}}(\overline{x})\big|<\delta$: since
	\begin{itemize}
		\item[i)] $\big\|\mathcal{A}^{h_{\varrho}}_{\{\phi_t^{'}\}}(\overline{x})\big\|_{h_{\varrho}}<\epsilon$ (by \eqref{eqn:action_epsbounds}),
		\item[ii)]$E(u)>c_{\infty}>\epsilon$ (by Theorem~\ref{thm:ballcrossing} and \eqref{eqn:epsdelt}) and
		\item[iii)]$\mathcal{A}_{\{\phi_t^{'}\}}({\overline{\gamma}}^k)=0$ (by \eqref{eqn:action_gamma}),
	\end{itemize}
	we have
	\[
	\big|\mathcal{A}_{\{\phi_t^{'}\}}(\overline{x})\big|>h_{\varrho} - \epsilon.
	\]
	Then, by the definition of augmented action \eqref{eqn:augmentedaction} and since
	\begin{itemize}
		\item[i)] $\big|\underline{\mathcal{A}}_{\{\phi_t^{'}\}}(\overline{x})\big|<\delta$ and
		\item[ii)] $2(\epsilon +\delta)<\lambda_{\varrho}$ (by \eqref{eqn:epsdelt}),
	\end{itemize}
	we have
	\[
	\big|\D_{\{\phi_t^{'}\}}(\overline{x})\big|> \frac{2}{\lambda_{\varrho}}(h_{\zeta}-\epsilon-\delta)\geq2c_{1,T}^{\min}-\frac{2(\epsilon+\delta)}{\lambda_{\varrho}} >2c_{1,T}^{\min}-1.
	\]
	The second inequality follows from $h_{\zeta}/\lambda_{\varrho}=c_{1,\zeta}^{\min}$ and $c_{1,\zeta}^{\min}\geq c_{1,T}^{\min}.$
	Thus, by \eqref{eqn:mi_czi},
	\[
	\big|\MUCZ(\overline{x})\big|>2c_{1,T}^{\min}-1-n\geq n+2-1-n=1
	\]
	where the second inequality follows from the requirement that $c_{1,T}^{\min}\geq n/2 +1$. We obtained a contradiction since $\MUCZ(\overline{x})=\pm 1$.
	
	Consider now the second case in \eqref{eqn:augmented_bounds}, i.e. $\big|\underline{\mathcal{A}}_{\{\phi_t^{'}\}}(\overline{x})\big|>C$: by the definition of augmented action \eqref{eqn:augmentedaction}, we obtain
	\[
	\big|\mathcal{A}_{\{\phi_t^{'}\}}(\overline{x})\big|> C - \frac{\lambda_{\varrho}}{2} \big|\D_{\{\phi_t^{'}\}}(\overline{x})\big| >
	C-\frac{\lambda_{\varrho}}{2}(n+1)=:\tau
	\]
	where the second inequality follows from the fact that $\big|\D_{\{\phi_t\}}(\overline{x})\big|<n+1$ (which holds since $\MUCZ(\overline{x})=\pm 1$ and by \eqref{eqn:mi_czi}). Hence the action of $\overline{x}$ is outside the interval $(-\tau,\tau)$ and we obtained a contradiction. 
	\end{proof}
	The previous lemma also holds for $q{\overline{\gamma}}^k$ (where $q\overline{x}$ is as defined in Section~\ref{section:capproduct}) with the shifted range of actions $(-\tau,\tau)-\nu_{T}h_{\varrho}.$
	For an interval $(a,b)$ containing the interval $[-\nu_{T}h_{\varrho},0]$ and contained in the intersection of the action intervals $(-\tau,\tau)$ and $(-\tau,\tau)-\nu_{T}h_{\varrho}$,
	Lemma~\ref{lemma:tau} holds for both ${\overline{\gamma}}^k$ and $q{\overline{\gamma}}^k$ and the interval $(a,b)$.
	\begin{remark}
		Observe that the existence of such an interval $(a,b)$ is guaranteed by $-\tau<-\nu_{T}h_{\varrho}<0<\tau-\nu_{T}h_{\varrho}$ that follows from \eqref{eqn:constant_C}.
	\end{remark}
	For the sake of completeness, we state this result in the following lemma.
	\begin{lemma}\label{lemma:qgamma}
		The \emph{orbits} ${\overline{\gamma}}^k$ and $q{\overline{\gamma}}^k$ are not connected by a solution of \eqref{eqn:gradientflowline} to any $\overline{x}\in \overline{\mathcal{P}_{k\zeta}}$  with Conley--Zehnder index $\pm 1$ and action in $(a,b)$ where
		\[ [-\nu_{T}h_{\varrho},0]\subset (a,b)\subseteq (-\tau,\tau)\cap (-\tau-\nu_{T}h_{\varrho},\tau-\nu_{T}h_{\varrho}).\]
		
		In particular, ${\overline{\gamma}}^k$ and $q{\overline{\gamma}}^k$ are closed in $C^{(a,b)}_*$ and $[{\overline{\gamma}}^k]\not=0\not=[q{\overline{\gamma}}^k]\in HFN^{(a,b)}_*(\theta^{\prime}_t, k\zeta)$. 
		
		Moreover, the orbits ${\overline{\gamma}}^k$ and $q{\overline{\gamma}}^k$ must enter every cycle representing their homology classes, respectively $[{\overline{\gamma}}^k]$ and $[q{\overline{\gamma}}^k]$, in $HFN^{(a,b)}_*(\theta^{\prime}_t, k\zeta)$.
	\end{lemma}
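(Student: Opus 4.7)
The plan is to reduce Lemma~\ref{lemma:qgamma} to Lemma~\ref{lemma:tau} by an action‐shifting argument, then read off the cycle/non‐triviality/every‐cycle consequences from the non‐connection statement by standard filtered Floer complex bookkeeping. First I would observe that Lemma~\ref{lemma:tau}, applied directly to $\overline{\gamma}^k$, provides the non‐connection to orbits of Conley--Zehnder index $\pm 1$ over all of $(-\tau,\tau)$, and since $(a,b)\subseteq(-\tau,\tau)$ the $\overline{\gamma}^k$ half of the statement is immediate; everything else is about packaging the corresponding shifted statement for $q\overline{\gamma}^k$.

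For $q\overline{\gamma}^k$, I would re-run the proof of Lemma~\ref{lemma:tau} verbatim, replacing the reference action $\CA_{\{\phi_t^{\prime}\}}(\overline{\gamma}^k)=0$ by
\[
\CA_{\{\phi_t^{\prime}\}}(q\overline{\gamma}^k)=\CA_{\{\phi_t^{\prime}\}}(\overline{\gamma}^k)+\varrho(q)=-\nu_{T}h_{\varrho},
\]
where the evaluation $\varrho(q)=-\nu_{T}h_{\varrho}$ comes from Remark~\ref{rmk:varphi_zeta} together with \eqref{eqn:nus}, and the action‐capping identity $\CA_{\{\phi_t\}}([x,v]\#A)=\CA_{\{\phi_t\}}([x,v])+\varrho(A)$. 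Conditions \eqref{eqn:action_epsbounds} and \eqref{eqn:augmented_bounds} are statements about the underlying orbit, not about any specific lift (the augmented action is capping‐independent by Remark~\ref{rmk:augmented_independent}, and the action mod $h_{\varrho}$ is as well), so both cases of the Lemma~\ref{lemma:tau} argument transfer. The only change is that the forbidden action window for connected orbits is now shifted by $-\nu_{T}h_{\varrho}$, giving $(-\tau,\tau)-\nu_{T}h_{\varrho}$; the containment $(a,b)\subseteq(-\tau-\nu_{T}h_{\varrho},\tau-\nu_{T}h_{\varrho})$ then yields the $q\overline{\gamma}^k$ half of the statement.

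The hypothesis $[-\nu_{T}h_{\varrho},0]\subset(a,b)$ is needed to ensure that both $\overline{\gamma}^k$ (action $0$) and $q\overline{\gamma}^k$ (action $-\nu_{T}h_{\varrho}$) are actually present as generators of $C^{(a,b)}_*$. With the non‐connection statement established for both orbits, the three remaining conclusions are automatic: (i) since Floer's $\partial$ only hits generators of Conley--Zehnder index one lower than the source, the non‐connection to index‐$\pm 1$ generators in $(a,b)$ gives $\partial\overline{\gamma}^k=0=\partial(q\overline{\gamma}^k)$ in $C^{(a,b)}_{*-1}$; (ii) neither orbit can lie in $\im\partial$ restricted to $C^{(a,b)}_*$, since any index‐$+1$ generator $\overline{y}$ in $(a,b)$ with $\overline{\gamma}^k$ (or $q\overline{\gamma}^k$) in its boundary would be Floer‐connected to it, contradicting the main statement, so $[\overline{\gamma}^k]\neq 0\neq [q\overline{\gamma}^k]$; (iii) if some cycle $c$ represented $[\overline{\gamma}^k]$ without $\overline{\gamma}^k$ in its support, then $c-\overline{\gamma}^k=\partial d$ for some $d\in C^{(a,b)}_{*+1}$, and $\overline{\gamma}^k$ would appear with nonzero coefficient in $\partial d$, producing an index‐$+1$ generator in $(a,b)$ Floer‐connected to $\overline{\gamma}^k$ and again contradicting the main statement; the same argument applies to $q\overline{\gamma}^k$.

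The main (and only) delicate point is verifying that the Ball-Crossing Energy Theorem and the dichotomy \eqref{eqn:action_epsbounds}--\eqref{eqn:augmented_bounds} genuinely depend only on the geometric orbit $\gamma^k$ in $M$ and not on the choice of capping, so that the proof of Lemma~\ref{lemma:tau} transports to the $q$-shifted capping without modification; once that is observed, the rest is pure translation of the action interval.
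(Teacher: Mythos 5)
Your argument is correct and is essentially the paper's own: the paper obtains Lemma~\ref{lemma:qgamma} precisely by observing that the proof of Lemma~\ref{lemma:tau} transports verbatim to $q{\overline{\gamma}}^k$ once the reference action is shifted to $\CA_{\{\phi_t^{\prime}\}}(q{\overline{\gamma}}^k)=-\nu_{T}h_{\varrho}$ (still a point of $h_{\varrho}\Z$ since $\nu_{T}\in\N$, so the mod-$h_{\varrho}$ and augmented-action dichotomies, being capping-independent, apply unchanged), yielding the shifted window $(-\tau,\tau)-\nu_{T}h_{\varrho}$, and then intersecting the two windows. Your explicit unpacking of the closedness, non-exactness and every-cycle consequences is the same standard filtered-complex bookkeeping the paper leaves implicit.
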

	
	Recall that, by \eqref{eqn:action_epsbounds}, all periodic orbits of $\phi'$ have action values in the $\epsilon$-neighborhood of $h_{\varrho}\Z$. The next lemma yields a contradiction and the main theorem follows.
	
	\begin{lemma}\emph{{\cite[Lemma~4.2]{GG:hyp12}}}\label{lemma:orbity}
		The symplectomorphism $\phi'$ has a periodic orbit with action outside the $\epsilon$-neighborhood of $h_{\varrho}\Z$.
	\end{lemma}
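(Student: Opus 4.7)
The plan is to produce a periodic orbit $\overline{y}$ of $\phi'$ whose action lies in one of the gaps of $\bigcup_{n\in\Z}(nh_\varrho-\epsilon,\; nh_\varrho+\epsilon)$, by combining the quantum product action on filtered Floer--Novikov homology with Theorem~\ref{thm:ballcrossing}. The approach closely parallels the proof of \cite[Lemma~4.2]{GG:hyp12}, with the Floer--Novikov theory replacing the usual Floer theory.

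I would start from the fact that $\Phi_\beta\circ\Phi_\alpha(\overline{\gamma}^k)$ and $q\overline{\gamma}^k$ differ by a Floer--Novikov boundary in $C_{-2c_1^{\min}}^{(a,b)}(\theta'_t,k\zeta)$; this follows from $\beta*\alpha=q[M]$ via the multiplicativity~\eqref{eqn:multip_quantum} applied to diagram~\eqref{diagram:cap} (the diagram is compatible with the single interval $(a,b)$ since $\varrho(\alpha)=0=\varrho(\beta)$ for ordinary classes). Because the class $[q\overline{\gamma}^k]\in HFN_{-2c_1^{\min}}^{(a,b)}(\theta'_t,k\zeta)$ is non-zero and, by Lemma~\ref{lemma:qgamma}, the orbit $q\overline{\gamma}^k$ must appear in every cycle representing it, $q\overline{\gamma}^k$ must appear in $\Phi_\beta(\Phi_\alpha(\overline{\gamma}^k))$. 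Its coefficient there is $\sum_{\overline{y}} m(\overline{\gamma}^k,\overline{y};\sigma_\alpha)\,m(\overline{y},q\overline{\gamma}^k;\sigma_\beta)\pmod 2$, so some $\overline{y}\in\overline{\mathcal{P}_{k\zeta}}$ realizes both factors. This $\overline{y}$ has $\MUCZ(\overline{y})=\deg(\alpha)-2n$, is distinct from $\overline{\gamma}^k$ and $q\overline{\gamma}^k$ because $\deg(\alpha)<2n$, and satisfies $\mathcal{A}_{\phi'_t}(\overline{y})\in(-\nu_T h_\varrho,0)$ by the action-filtration of the two operations.

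I would then exclude $\mathcal{A}_{\phi'_t}(\overline{y})$ from the $\epsilon$-neighborhood of each multiple $-mh_\varrho$ for $m=0,1,\ldots,\nu_T$. The extreme values $m=0$ and $m=\nu_T$ are handled by Theorem~\ref{thm:ballcrossing}: since $K\equiv H^{\natural k}$ on $U$, the only periodic orbit of $\phi'$ inside $U$ is $\gamma^k$, so $y\neq\gamma$ lies outside $U$ and each of the two pair-of-pants solutions produced above must cross $\partial U$ along an end asymptotic to $\gamma^k$. The Ball-Crossing energy lower bound $c_\infty>\epsilon$, together with the action identity $E(u)=\mathcal{A}_{\phi'_t}(\overline{x}_+)-\mathcal{A}_{\phi'_t}(\overline{x}_-)$ for $\varrho(\alpha)=\varrho(\beta)=0$, then yields $\mathcal{A}_{\phi'_t}(\overline{y})<-\epsilon$ and $\mathcal{A}_{\phi'_t}(\overline{y})>-\nu_T h_\varrho+\epsilon$. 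When $c_1^{\min}=c_{1,T}^{\min}$ (equivalently $\nu_T=1$, the first alternative of~\eqref{item:qs}) these are the only multiples of $h_\varrho$ in the relevant range and the proof is already complete.

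For the general case $\nu_T>1$ with intermediate multiples $-mh_\varrho$, $1\leq m\leq \nu_T-1$, I would invoke the augmented-action dichotomy~\eqref{eqn:augmented_bounds}. Combining \eqref{eqn:mi_czi} with $\MUCZ(\overline{y})=\deg(\alpha)-2n$ and the choice of $C$ in~\eqref{eqn:constant_C} shows that the branch $|\underline{\mathcal{A}}_{\phi'_t}(\overline{y})|>C$ forces $\mathcal{A}_{\phi'_t}(\overline{y})$ outside $(-\nu_T h_\varrho,0)$, contradicting the action-filtration bound from the second paragraph. Hence $|\underline{\mathcal{A}}_{\phi'_t}(\overline{y})|<\delta$, which pins $\mathcal{A}_{\phi'_t}(\overline{y})$ into a window of half-width $\delta+\tfrac{\lambda_\varrho n}{2}$ centered at $\tfrac{\lambda_\varrho}{2}(\deg(\alpha)-2n)$. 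Using $h_\varrho=\lambda_\varrho c^{\min}_{1,T}$ and $2(\epsilon+\delta)<\lambda_\varrho$, for this window to meet $(-mh_\varrho-\epsilon,\,-mh_\varrho+\epsilon)$ one would need the integer inequality $|\deg(\alpha)-2n+2mc^{\min}_{1,T}|<n+1$, which is ruled out for every $m\geq 1$ by the second alternative $\deg(\alpha)\geq 3n+1-2c^{\min}_{1,T}$ of~\eqref{item:qs}. The main obstacle is this tight index/action bookkeeping: one must juggle~\eqref{eqn:mi_czi}, the value of $\MUCZ(\overline{y})$, the constants $C$, $\lambda_\varrho$, and $h_\varrho=\lambda_\varrho c^{\min}_{1,T}$, and the augmented-action dichotomy all at once, and it is precisely this calibration that dictates the somewhat technical second alternative appearing in~\eqref{item:qs}.
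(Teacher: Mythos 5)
Your proposal is correct and follows essentially the same route as the paper's proof: the orbit $\overline{y}$ is produced as an intermediate generator in $\Phi_\eta\Phi_\sigma(\overline{\gamma}^k)$ via $\beta*\alpha=q[M]$ and Lemma~\ref{lemma:qgamma}, the bound \eqref{eqn:proof_actionbounds} comes from the Ball-Crossing Energy Theorem, and the two alternatives of \eqref{item:qs} are handled exactly as in the paper, with your per-$m$ integer inequality being an equivalent reorganization of the paper's mean-index estimate in the case $\underline{\mathcal{A}}_{\{\phi'_t\}}(\overline{y})<\delta$.
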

	\begin{proof} For ordinary homology classes $\alpha,\;\beta\in H_*(M)$ with $\deg(\alpha),\;\deg(\beta)<2n$ as in the statement of Theorem~\ref{maintheo}, consider $\Phi_{\beta*\alpha}([\overline{\gamma}^k])$ as an element of the group $HFN_*^{(a,b)}(\theta^{\prime}_t, \zeta)$ with $(a,b)$ as in Lemma~\ref{lemma:qgamma}. Since $\beta*\alpha=q[M]$, it follows, by \eqref{eqn:multip_quantum} and \eqref{diagram:cap}, that
	\[
	\Phi_{\beta}\Phi_{\alpha}([{\overline{\gamma}}^k])=\Phi_{\beta*\alpha}([{\overline{\gamma}}^k])=
	\Phi_{q[M]}([{\overline{\gamma}}^k])=q\Phi_{[M]}([{\overline{\gamma}}^k])= q[{\overline{\gamma}}^k].
	\]
	Take $\sigma$ and $\eta$ generic cycles representing the ordinary homology classes $\alpha$ and $\beta$, respectively. The chain $\Phi_{\eta}\Phi_{\sigma}({\overline{\gamma}}^k)$ represents the homology class $q[\overline{\gamma}^k]$ and hence the \emph{orbit} $q{\overline{\gamma}}^k$ enters the chain $\Phi_{\eta}\Phi_{\sigma}({\overline{\gamma}}^k)$ (by Lemma~\ref{lemma:qgamma}). Recall that $q[x,v]$ is the class $[x,w]$ where $w$ is the cylinder obtained by attaching $\nu_{\zeta} A_{\zeta}$ to $v$. Hence (see Figure~\ref{fig:existence_y}), there exists an \emph{orbit} $\overline{y}$ in $\Phi_{\sigma}({\overline{\gamma}}^k)$ connected to ${\overline{\gamma}}^k$ and $q{\overline{\gamma}}^k$ by trajectories which are solutions of \eqref{eqn:gradientflowline}. By the Ball-Crossing Energy Theorem, \eqref{eqn:epsdelt} and
	\begin{enumerate}
		\item $\mathcal{A}_{\{\phi_t^{'}\}}({\overline{\gamma}}^k)=0$
		\item\label{item:action}$\mathcal{A}_{\{\phi_t^{'}\}}(q{\overline{\gamma}}^k)=-\nu_{T} h_{\varrho}$,
	\end{enumerate}
	we obtain
	\begin{eqnarray}\label{eqn:proof_actionbounds}
	-\epsilon > \mathcal{A}_{\{\phi_t\}}(\overline{y})> -\nu_{T} h_{\varrho} +\epsilon.
	\end{eqnarray}
	The fact in point \ref{item:action} follows from $\varrho(\nu_{k\zeta}A_{k\zeta})=-\nu_{k\zeta}h_{k\zeta}= -\nu_T h_{\varrho}$ where the second equality holds by \eqref{eqn:nus}.
	
	\begin{figure}[htb!]
		\centering
		\def\svgwidth{300pt}
		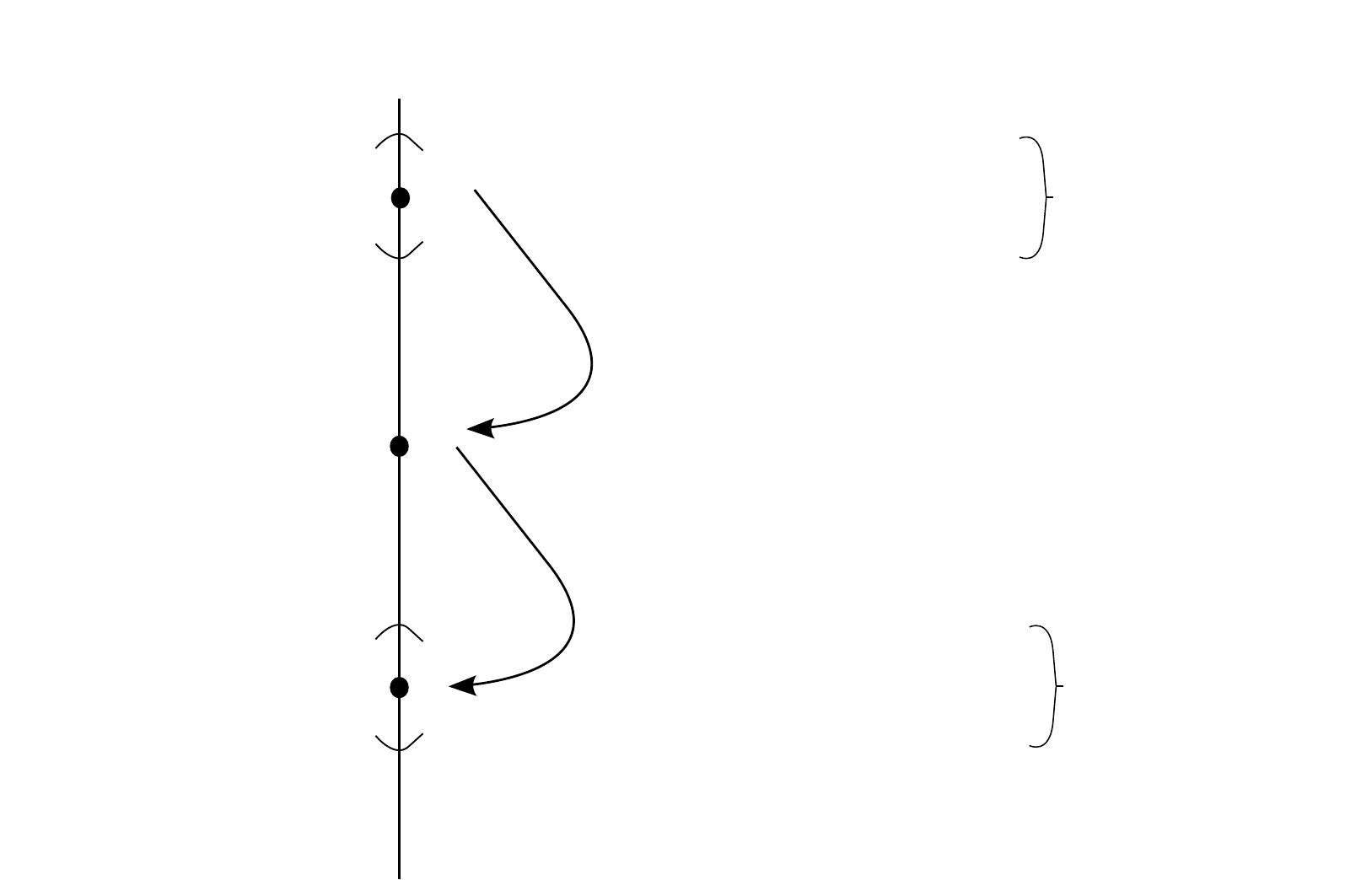
		\caption{The orbit $y$.}\label{fig:existence_y}
	\end{figure}
	
Let us consider the two cases in assumption \eqref{item:qs} of the main theorem. Namely, first assume $c^{\min}_{1}=c^{\min}_{1,T}$ (i.e., 
$\nu_T=1$).	In this case, we obtain, by \eqref{eqn:proof_actionbounds}, that the action value of $\overline{y}$ is outside the $\varepsilon$-neighborhood of $h_{\varrho}\Z$:
		\[
		-\epsilon > \mathcal{A}_{\{\phi'_t\}}(\overline{y}) > -h_{\varrho} +\varepsilon.
		\]

Now, assume $\deg(\alpha)\geq 3n-2c^{\min}_{1,T} +1$. As above, we distinguish the two cases in \eqref{eqn:augmented_bounds}: $\underline{\mathcal{A}}_{\{\phi'_t\}}(\overline{y})<\delta$ and $\underline{\mathcal{A}}_{\{\phi'_t\}}(\overline{y})>C$. Firstly, when $\underline{\mathcal{A}}_{\{\phi'_t\}}(\overline{y})<\delta$, assume $\mathcal{A}_{\{\phi'_t\}}(\overline{y})$ is in the $\eps$-neighborhood of $h_{\varrho}\Z$. Then \eqref{eqn:proof_actionbounds} implies that 
\[\mathcal{A}_{\{\phi'_t\}}(\overline{y})<-h_{\varrho}+\varepsilon.\] 
This condition and the fact that the augmented action is less than 
$\delta$ give the following upper bound for the mean index of $\overline{y}$
	\[\Delta_{\{\phi'_t\}}(\overline{y})<\frac{2}{\lambda_{\varrho}}(-h_{\varrho}+\epsilon+\delta).
	\]
	The right hand side of the inequality is equal to $-2c^{\text{min}}_{1,T}+ \frac{2}{\lambda_{\varrho}}(\varepsilon+\delta)$
	(by \eqref{eqn:c1minTprop}) and, by the choices in \eqref{eqn:epsdelt}, it is less than $-2c^{\text{min}}_{1,T}+1$.

	Moreover, since $\gamma$ is a hyperbolic periodic orbit and $\MUCZ(\overline{y})=\MUCZ(\overline{\gamma}^k)-2n+\deg(\alpha)$, we have 
	\begin{eqnarray} \label{eqn:CZ_alpha}
	\MUCZ(\overline{y}) =-2n+\deg(\alpha)
	\end{eqnarray}
	which implies, by \eqref{eqn:mi_czi}, that $\deg(\alpha)<3n - 2c^{\text{min}}_{1,T}+1$ which contradicts our hypothesis. Before we consider the second case in \eqref{eqn:augmented_bounds}, observe that the mean index of $\overline{y}$ is bounded from above by $n+1$:
	\begin{eqnarray}\label{eqn:Delta}
	\Delta(\overline{y})<n+1.
	\end{eqnarray}
	This follows from \eqref{eqn:mi_czi}, \eqref{eqn:CZ_alpha} 
	and our assumptions on the degree of the homology class $\alpha$. Now, when $\underline{\mathcal{A}}_{\{\phi'_t\}}(\overline{y})>C$, we have $|\mathcal{A}_{\{\phi'_t\}}(\overline{y})|>C-\frac{\lambda_{\varrho}}{2}|\Delta(\overline{y})|$. Then, by \eqref{eqn:constant_C},
	\[
	|\mathcal{A}_{\{\phi'_t\}}(\overline{y})|>C-\frac{\lambda_{\varrho}}{2}|\Delta(\overline{y})|>\nu_{T}h_{\varrho}\]
	which is impossible due to \eqref{eqn:proof_actionbounds}.

\end{proof}

%\nocite{*}
\bibliographystyle{amsalpha}
\bibliography{references}

\vspace{1cm}
\author{Marta Bator\'eo}

\address{Instituto Nacional de Matem\'atica Pura e Aplicada:\\
	\hspace*{0.7cm}Estrada Dona Castorina 110, 
	Rio de Janeiro 22460-320, Brazil}

\email{mbatoreo@impa.br}

\end{document}